\pgfplotsset{compat=newest}
\newcommand{\R}{{\mathbb{R}}}
\newcommand{\Z}{{\mathbb{Z}}}
\newcommand{\bb}{{\mathscr{B}}}
\newcommand{\aaa}{{\mathscr{A}}}
\newcommand{\sss}{{\mathscr{S}}}
\newcommand{\eps}{{\varepsilon}}
\newcommand{\zero}{\mathbf{0}}
\newcommand{\la}{\langle}
\newcommand{\ra}{\rangle}
\declaretheorem{theorem}
\declaretheorem{lemma}
\declaretheorem{example}
\declaretheorem{conjecture}
\title{\textbf{Stability for binary scalar products}}
\author[1,2]{Andrey Kupavskii}
\affil[1]{Moscow Institute of Physics and Technology, Moscow, Russia}
\affil[2]{G-SCOP, CNRS, Grenoble, France}
\author[3]{Dmitry Tsarev}
\affil[3]{University of Cambridge, Cambridge, United Kingdom}
\date{}
\begin{document}

\maketitle

\begin{abstract}
    Bohn, Faenza, Fiorini, Fisikopoulos, Macchia, and Pashkovich (2015) conjectured that 2-level polytopes cannot simultaneously have many vertices and many facets, namely, that the maximum of the product of the number of vertices and facets is attained on the cube and cross-polytope. This was proved in a recent work by Kupavskii and Weltge. In this paper, we resolve a strong version of the conjecture by Bohn et al., and find the maximum possible product of the number of vertices and the number of facets in a 2-level polytope that is not affinely isomorphic to the cube or the cross-polytope. To do this, we get a sharp  stability result of Kupavskii and Weltge's upper bound on $\left|\aaa\right|\cdot\left|\bb\right|$ for $\aaa,\bb \subseteq \R^d$ with a property that $\forall a \in \aaa, b \in \bb$ the scalar product $\la a, b\ra \in\{0,1\}$.
\end{abstract}

\section{Introduction}

A polytope $P$ is {\it 2-level} if for every facet-defining hyperplane $H$ there is a parallel hyperplane $H'$ such that $H \cup H'$ contains all vertices of $P$. Basic examples of 2-level polytopes are simplices, hypercubes and cross-polytopes, but they also generalize a variety of interesting polytopes such as Birkhoff, Hanner, and Hansen polytopes, order polytopes and chain polytopes of posets, stable matching polytopes, and stable set polytopes of perfect graphs~\cite{aprile18}. Combinatorial structure of two-level polytopes has also been studied in~\cite{fiorini16}, and enumeration of such polytopes in~\cite{bohn18} led to a beautiful conjecture about their vertex and facet count, which was proven in~\cite{kupavskii22}:
\begin{restatable}{theorem}{twoLevelOld}
    \label{two_level_old_bound}
    If $P$ is a $d$-dimensional 2-level polytope, its number of vertices $f_0(P)$ and facets $f_{d-1}(P)$ satisfy
    \[
        f_0(P) \cdot f_{d-1}(P) \leq d 2^{d+1}.
    \]
\end{restatable}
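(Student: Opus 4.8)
The plan is to reduce the polytope inequality to a purely combinatorial bound on sets with binary scalar products, and then to prove the latter. The bridge is the \emph{slack matrix}. Writing the facets of $P$ as $\la a_F,x\ra\le\gamma_F$, the slack matrix has entries $S_{v,F}=\gamma_F-\la a_F,v\ra\ge 0$ over vertices $v$ and facets $F$. The $2$-level hypothesis says that for each $F$ the quantity $\la a_F,\cdot\ra$ takes only two values on the vertices, namely $\gamma_F$ on $F$ and some $\delta_F<\gamma_F$ on the parallel hyperplane; hence each column of $S$ takes only the two values $0$ and $\gamma_F-\delta_F$, and after scaling the column it becomes $0/1$. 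Since
\[
  S_{v,F}=\la (v,1),(-a_F,\gamma_F)\ra ,
\]
the (scaled) slack matrix is the matrix of scalar products of lifted vertices $\tilde v=(v,1)$ against lifted facet vectors $\tilde c_F=(-a_F,\gamma_F)$ in $\R^{d+1}$. Setting $\aaa=\{\tilde v\}$ and $\bb=\{\tilde c_F\}$, I obtain finite sets in $\R^{d+1}$ with $\la a,b\ra\in\{0,1\}$ for all $a\in\aaa,\ b\in\bb$, with $|\aaa|=f_0(P)$ and $|\bb|=f_{d-1}(P)$; moreover $\aaa$ lies in the affine hyperplane $\{x_{d+1}=1\}$, and both sets span $\R^{d+1}$ because $P$ is full-dimensional.

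It now suffices to prove a sharp bound on $|\aaa|\cdot|\bb|$ for binary scalar products and to extract the right constant. The Kupavskii--Weltge inequality bounds $|\aaa|\cdot|\bb|$ for any finite $\aaa,\bb\subseteq\R^{m}$ with $\la a,b\ra\in\{0,1\}$; the issue is that the general bound (attained, for instance, by $\aaa=\{0,1\}^m$ and $\bb=\{\zero,e_1,\dots,e_m\}$, giving $(m+1)2^m$) is slightly too weak, so one must exploit the extra structure produced by the reduction. Two features are available: $\aaa$ lies in the affine hyperplane $\{x_{d+1}=1\}$, and, equivalently, in the unlifted picture each facet-normal direction supports at most two parallel facets. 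Either feature should cut the general bound down to the value forced by the extremal examples. Those examples pin the constant: the cube $[0,1]^d$, with $\aaa=\{(v,1):v\in\{0,1\}^d\}$ and $\bb=\{(e_i,0),(-e_i,1):1\le i\le d\}$, gives $|\aaa|\cdot|\bb|=2^d\cdot 2d=d\,2^{d+1}$, and its polar cross-polytope gives the same value, consistent with $f_0\cdot f_{d-1}$ being invariant under polarity. So the sharp statement I must reach is $|\aaa|\cdot|\bb|\le d\,2^{d+1}$ for the configurations arising from $2$-level polytopes, which is exactly Theorem~\ref{two_level_old_bound}.

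The main obstacle is the binary bound itself, and specifically matching the constant. A crude argument---the matrix $(\la a,b\ra)$ is a $0/1$ matrix of rank at most $m$, so it cannot have too many distinct rows and columns---overshoots, and in any case cannot see the affine-hyperplane refinement nor the factor $2$ coming from parallel facets. I would instead induct on $m$. Fixing some $b\in\bb$, I split $\aaa=\aaa_0\sqcup\aaa_1$ by the value of $\la a,b\ra$, project $\aaa_0$ into $b^{\perp}\cong\R^{m-1}$ (scalar products with the orthogonal projections of the remaining vectors of $\bb$ are preserved, so the binary property survives), and treat $\aaa_1$, which lives in the affine slice $\{\la\cdot,b\ra=1\}$, by an inductive estimate of the same shape; then I recombine the two contributions while accounting for $b$. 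The delicate points are (i) choosing $b$ so that the projection is genuinely dimension-reducing and does not collapse distinct elements of $\aaa$, and (ii) forcing every inequality in the induction to be simultaneously tight in the cube and cross-polytope configurations, so that the constant comes out as $d\,2^{d+1}$ rather than something larger. This exact equality analysis---characterizing when the bound is attained---is the crux, and it is precisely what the present paper sharpens into a stability statement.
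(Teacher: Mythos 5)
Your reduction to a binary-scalar-product configuration is sound, but the argument stops exactly where the work begins: you never establish the bound $|\aaa|\cdot|\bb|\le d\,2^{d+1}$ for the configurations you construct. The lift $\tilde v=(v,1)$, $\tilde c_F=(-a_F,\gamma_F)$ places both families in $\R^{d+1}$, where the general bound of Theorem~\ref{d_plus_one_two_d} only gives $(d+2)2^{d+1}$; you correctly observe that the extra structure (the affine hyperplane $x_{d+1}=1$, equivalently at most two parallel facets per normal direction) must be exploited to recover the constant, but your final paragraph only sketches the shape of an induction and explicitly defers ``the crux''. As written this is a plan for a proof, not a proof, and the missing step is precisely the quantitative one that distinguishes $d\,2^{d+1}$ from $(d+1)2^{d+1}$ or $(d+2)2^{d+1}$.

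The way this gap is closed in \cite{kupavskii22} (and implicitly in Section~\ref{sec2level} of this paper) avoids the lift entirely: translate $P$ so that $\zero$ is a vertex, let $\aaa$ be the vertex set and $\bb$ the set of suitably scaled facet normals together with $\zero$, all inside $\R^d$, so that $\la a,b\ra\in\{0,1\}$ holds without adding a coordinate. Each $b$ accounts for at most two parallel facets, so $F\le 2(|\bb|-1)$. If every $b$ defines only one facet, then $V\cdot F<|\aaa|\,|\bb|\le(d+1)2^d$ and we are done. Otherwise some $b_d$ defines two parallel facets; after arranging that the facet $P\cap b_d^\bot$ through the origin carries at least half of the vertices, the families $\aaa_0=\aaa\cap b_d^\bot$ and $\pi(\bb)$ (the projection onto $b_d^\bot$) are spanning families of $\R^{d-1}$ with binary scalar products, whence $|\aaa_0|\,|\pi(\bb)|\le d\,2^{d-1}$ by Theorem~\ref{d_plus_one_two_d} in dimension $d-1$, and $V\cdot F\le 2|\pi(\bb)|\cdot 2|\aaa_0|\le d\,2^{d+1}$. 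The decisive device you are missing is this projection onto a facet hyperplane: it converts the factor $2$ from parallel facets and the drop in dimension into exactly the stated constant, rather than leaving the improvement over the generic bound as an unproved assertion.
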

\noindent This bound is tight, as is witnessed by polytopes that are affinely isomorphic to the cube or the cross-polytope. Authors of~\cite{bohn18} conjectured that those are the only instances where equality is attained (see \cite{aprile18}). In this paper, we prove this in a strong sense:

\begin{restatable}{theorem}{twoLevelNew}
    \label{two_level_new_bound}
    Fix $d>1$. Let $P$ be a $d$-dimensional $2$-level polytope that is not affinely isomorphic to the cube or the cross-polytope. Then 
    \[
        f_0(P) \cdot f_{d-1}(P) \leq \left(d-1\right) 2^{d+1} + 8\left(d-1\right).
    \]
\end{restatable}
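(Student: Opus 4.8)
The plan is to deduce Theorem~\ref{two_level_new_bound} from a stability version of the Kupavskii--Weltge bound on binary scalar products, via the same homogenization that reduces Theorem~\ref{two_level_old_bound} to that bound. First I would set up the reduction. Given a full-dimensional $2$-level polytope $P\subseteq\R^d$, lift each vertex $v$ to $(1,v)\in\R^{d+1}$ and collect these as $\bb$, so that $|\bb|=f_0(P)$ and $\bb$ spans $\R^{d+1}$ while lying in the affine hyperplane $\{x_0=1\}$. For each facet $F$, the $2$-level property supplies two parallel hyperplanes covering all vertices; normalizing the corresponding affine functional to take the values $0$ and $1$ on them yields a vector $a_F\in\R^{d+1}$ with $\la a_F,b\ra\in\{0,1\}$ for all $b\in\bb$. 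Collecting these gives $\aaa$ with $|\aaa|=f_{d-1}(P)$, so that $f_0(P)\cdot f_{d-1}(P)=|\aaa|\cdot|\bb|$. The extra structure that $\bb$ lies in a hyperplane off the origin is exactly what turns the generic binary-scalar-product bound into the baseline $d2^{d+1}=\bigl((d+1)-1\bigr)2^{d+1}$ recorded in Theorem~\ref{two_level_old_bound}.

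The heart of the argument is the following stability statement, which I would prove by induction on $n=d+1$: if $\aaa,\bb\subseteq\R^{n}$ have $\la a,b\ra\in\{0,1\}$ for all $a\in\aaa,b\in\bb$, with $\bb$ spanning and contained in an affine hyperplane missing the origin, then either $(\aaa,\bb)$ is, up to an invertible linear change of coordinates, the cube configuration or the cross-polytope configuration, or else $|\aaa|\cdot|\bb|\le (n-2)2^{n}+8(n-2)$. To run the induction I would fix a nonzero $a\in\aaa$ and split $\bb=\bb_0\sqcup\bb_1$ according to whether $\la a,\cdot\ra$ equals $0$ or $1$. Orthogonal projection onto $a^\perp\cong\R^{n-1}$ sends $(\aaa,\bb_0)$ to a binary configuration in dimension $n-1$, to which the inductive bound applies; the value-$1$ part $\bb_1$ and the collisions created by the projection of $\aaa$ must be accounted for separately, using that at most two elements of $\aaa$ can lie on any line parallel to $a$ (since $\bb_1\neq\varnothing$ forces the relevant scalar products to stay in $\{0,1\}$). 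Summing these contributions and optimizing over the choice of $a$ recovers the baseline bound, with the inductive inequalities becoming equalities only in the rigid cube/cross situation.

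The main obstacle is the rigidity/stability analysis. I expect that proving the inductive step with a sharp additive slack --- so that any configuration not of cube or cross type loses at least $2^{n}-8(n-2)$ from the baseline --- requires a detailed case analysis of how the two pieces $\bb_0,\bb_1$ and the projected family $\pi(\aaa)$ can fit together near equality, and of which lower-dimensional extremizers can be assembled into a full-dimensional one. In particular one must show that a single defect (a missing vertex or facet relative to the cube/cross pattern, or a non-generic projection) already forces the loss, and then exhibit the extremal second-best family attaining $(d-1)2^{d+1}+8(d-1)$, which pins down the constant $8(d-1)$ and shows that the result is sharp. This is where the bulk of the work, and essentially all of the genuinely new content beyond~\cite{kupavskii22}, resides.

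Finally I would translate back. The cube and cross configurations correspond, after de-lifting $\bb$ to the vertex set of $P$ and reading off the facet functionals from $\aaa$, precisely to the affine images of $[0,1]^d$ and of the cross-polytope; hence if $P$ is not affinely isomorphic to either, its configuration $(\aaa,\bb)$ is not extremal, and the stability statement gives $f_0(P)\cdot f_{d-1}(P)=|\aaa|\cdot|\bb|\le (d-1)2^{d+1}+8(d-1)$, as claimed.
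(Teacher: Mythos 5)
There is a genuine gap: your proof consists of a reduction plus an asserted stability statement, and the stability statement --- which is where all of the content lies --- is never proved. You write that establishing the inductive step ``requires a detailed case analysis'' and that ``this is where the bulk of the work \dots resides,'' which is an accurate self-assessment: without that analysis there is no proof. Moreover, the intermediate statement you propose is not the one the paper establishes, and it is not clear it is even available. The paper works in $\R^d$ (not in a homogenized $\R^{d+1}$), takes $\aaa$ to be the vertex set after translating a vertex to the origin and $\bb$ to be the set of facet normals (so each $b$ may define up to two facets), and its quantitative engine is Theorem~\ref{d2d_plus_2d}: $|\aaa|\cdot|\bb|\le d2^d+2d$ for spanning families with $|\aaa|,|\bb|\ge d+2$, applied in dimension $d-1$ to $\aaa_0$ and $\pi(\bb)$; the factor $4$ from $V\cdot F\le 4|\aaa_0|\cdot|\pi(\bb)|$ then produces $(d-1)2^{d+1}+8(d-1)$. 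Your lifted version instead requires, in $\R^{n}$ with $n=d+1$, the much stronger dichotomy ``cube/cross configuration or $|\aaa|\cdot|\bb|\le(n-2)2^{n}+8(n-2)$'' for arbitrary binary-scalar-product families with $\bb$ in an affine hyperplane off the origin; this is far below the paper's $n2^n+2n$ second-best bound for vector families, so it cannot be imported from Theorem~\ref{d2d_plus_2d} and would have to be proved from scratch, exploiting the hyperplane condition in an essential way. You give no argument for it.

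Two further concrete omissions. First, the cases where the stability bound cannot be invoked because one of the projected families is too small are exactly where the cube and the cross-polytope must be recognized; the paper handles these by separate arguments (the cube when $\bb'$ is a basis each of whose elements defines two facets; the cross-polytope via a combinatorial analysis of a set family $\sss\subseteq 2^{[d-1]}$ using Lemmas~\ref{78sufficientLemma}, \ref{smallDifferenceLemma} and \ref{crossPolyRepr}). The paper explicitly warns that the few-vertices case is not symmetric to the few-facets case (a triangular prism has $(6,5)$ but no $2$-level polytope has $(5,6)$), so ``optimizing over the choice of $a$'' does not dispose of it. Second, your translation step asserts that non-extremal polytopes yield non-extremal configurations, but the correspondence between ``$P$ affinely isomorphic to the cube or cross-polytope'' and ``$(\aaa,\bb)$ is the cube/cross configuration up to linear isomorphism'' also needs an argument (in the paper this is precisely the content of the $|\pi(\bb)|=d$ and $|\aaa_0|=d$ cases). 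As it stands, the proposal identifies a plausible high-level strategy but proves none of the statements that carry the theorem.
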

 \noindent The following two examples demonstrate tightness of the bound in Theorem \ref{two_level_new_bound}.
\begin{example}[Suspension of a cube]\label{CubeSuspension}
    Let $\{e_i\}$ be the standard basis of $\mathbb{R}^d$,
    \begin{equation*}
        P=\operatorname{Conv}\Biggl(\left\{\sum_{i=1}^{d-1}\varepsilon_i e_i: \varepsilon_i\in\{-1, 1\} \text{ for }i\in [d-1]\right\}\cup\left\{e_d, -e_d\right\}\Biggr).
    \end{equation*}
 Here $f_0(P) = 2 + 2^{d-1}$ and $f_{d-1}(P) = 4(d-1)$.
\end{example}
\begin{example}[Cross-polytope $\times$ segment]\label{OctahedronCrossSegment}
    Let $\{e_i\}$ be the standard basis of $\mathbb{R}^d$,
    \begin{equation*}
        P=\operatorname{Conv}\bigl(\left\{ \eps_i e_i+\eps_d e_d: i\in [d-1], \eps_i,\eps_d\in \{-1,1\}\right\}\bigr).
    \end{equation*}
      This is (up to coordinate scaling) the dual of the polytope in the previous example and, in particular,  $f_0(P) = 4(d-1)$ and $f_{d-1}(P) = 2 + 2^{d-1}$. 
\end{example}

\noindent As in the paper \cite{kupavskii22}, the main intermediate result that is of independent interest concerns families of vectors with binary scalar products.

\begin{restatable}[]{theorem}{mainth}\label{d2d_plus_2d}
     Let $\aaa,\bb \subseteq \R^d$ be families of vectors that both linearly span $\R^d$. Suppose that $\la a, b\ra \in \{0,1\}$ holds for all $a \in \aaa$, $b \in \bb$. Furthermore, suppose that $|\aaa|,|\bb|\ge d+2$. Then \begin{equation}\label{eqmainvec}\left|\mathcal{A}\right| \cdot\left|\mathcal{B}\right| \leq d 2^d + 2d.\end{equation}
\end{restatable}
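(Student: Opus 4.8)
The plan is to reduce to a purely combinatorial statement and then run a stability-aware induction on the dimension $d$, sharpening the argument behind the Kupavskii--Weltge bound. First I would normalize the configuration: pick a basis $b_1,\dots,b_d\in\mathcal{B}$ and apply the linear map sending it to the standard basis together with the dual map on $\mathcal{A}$, which preserves every scalar product and turns the hypothesis $\langle a,b_i\rangle\in\{0,1\}$ into $\mathcal{A}\subseteq\{0,1\}^d$ with $e_1,\dots,e_d\in\mathcal{B}$. Since we only seek an upper bound on $|\mathcal{A}|\cdot|\mathcal{B}|$ and the binary condition is preserved under adjoining admissible $0/1$ vectors, I may assume $\mathcal{A}$ is saturated, i.e. $\mathcal{A}=\{a\in\{0,1\}^d:\langle a,b\rangle\in\{0,1\}\ \forall b\in\mathcal{B}\}$, so in particular $\mathbf 0\in\mathcal{A}$. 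Evaluating on a basis gives the crude caps $|\mathcal{A}|,|\mathcal{B}|\le 2^d$, and the richest configuration is the full cube $\mathcal{A}=\{0,1\}^d$, which forces $\mathcal{B}\subseteq\{\mathbf 0,e_1,\dots,e_d\}$ and hence $|\mathcal{B}|\le d+1$, attaining the product $(d+1)2^d$. The whole point is that the hypothesis $|\mathcal{A}|,|\mathcal{B}|\ge d+2$ excludes this endpoint, and I must quantify how far the product drops --- down to the value $d2^d+2d$ realised by $\mathcal{A}=\{0,1\}^{d-1}\times\{0\}\cup\{e_d\}$ together with $\mathcal{B}=\{\mathbf 0,e_1,\dots,e_{d-1}\}\times\{0,1\}$.

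The engine is induction on $d$, slicing along the coordinate dual to $e_d\in\mathcal{B}$. Writing $\mathcal{A}_0=\{a\in\mathcal{A}:a_d=0\}$ and $\mathcal{A}_1=\{a\in\mathcal{A}:a_d=1\}$ and projecting onto the first $d-1$ coordinates yields $\mathcal{A}_0',\mathcal{A}_1'\subseteq\{0,1\}^{d-1}$. A vector $b=(b',b_d)\in\mathcal{B}$ must then satisfy $\langle a',b'\rangle\in\{0,1\}$ for all $a'\in\mathcal{A}_0'$ and $\langle a',b'\rangle\in\{-b_d,\,1-b_d\}$ for all $a'\in\mathcal{A}_1'$. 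Thus $\mathcal{B}$ splits according to the value of $b_d$, and each part is governed by a binary (respectively shifted-binary) scalar-product condition in dimension $d-1$, to which the inductive estimate can be applied after passing to the $(d-1)$-dimensional families generated by $\mathcal{A}_0'$ and by $\mathcal{A}_1'$. Combining the two slices with the two $b_d$-classes is what should produce the recursion $f(d)=2f(d-1)+2^d-2d+4$ satisfied by the target $f(d)=d2^d+2d$.

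The main obstacle is precisely this coupling: the shift $b_d$ ties the admissible $b'$ to both slices at once, so the two $(d-1)$-dimensional sub-problems cannot be bounded independently, and the exact constant only emerges after a careful two-variable optimization over the slice sizes $|\mathcal{A}_0'|$ and $|\mathcal{A}_1'|$. Two features force extra care. First, the induction must carry a hypothesis valid for all slice sizes --- not merely sizes $\ge d+1$ --- because the extremal recursion runs through the degenerate endpoint where one slice is (essentially) the full $(d-1)$-cube while the other is a single point; this is exactly the interface at which the global constraint $|\mathcal{A}|,|\mathcal{B}|\ge d+2$ meets the cube endpoint excluded above. Second, I must track the near-extremal structure, not just the numerical bound, so as both to close the induction and to pin down the equality cases.

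To finish, I would verify that the only configurations meeting the bound are the ones dual to the suspension of a cube and to the cross-polytope $\times$ segment, completing the characterization alongside Theorem~\ref{two_level_new_bound}. I expect the base case $d=3$ to be dispatched by hand --- for $d=2$ the hypothesis $|\mathcal{A}|,|\mathcal{B}|\ge 4$ is vacuous, since it would force both families to be the full square --- after which the inductive step together with the endpoint analysis yields the sharp inequality \eqref{eqmainvec}.
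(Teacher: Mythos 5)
Your overall strategy---normalize so that $\aaa\subseteq\{0,1\}^d$ with a basis of $\bb$ sent to the standard basis, slice along the coordinate dual to a chosen $b_d\in\bb$, and induct on $d$---is the same skeleton as the paper's proof, but what you have written is a plan rather than a proof, and the steps you defer are exactly where the work lies. Three concrete gaps. First, you slice along an \emph{arbitrary} basis vector $e_d\in\bb$, whereas the paper must choose $b_d$ to maximize $\max(\dim\aaa_0,\dim\aaa_1)$; this extremal choice is what makes the preimage-counting statements (Claims~\ref{cl2} and~\ref{cl3}) true, and without it the bookkeeping ``at most two values of $b_d$ per $b'$'' does not by itself control $|\bb|$ in terms of the projected family. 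Second, the recursion $f(d)=2f(d-1)+2^d-2d+4$ is numerically consistent with the target but is not established by your decomposition: applying the inductive bound to $\aaa_0$ and the projection of $\bb$ requires that $\aaa_0$ span a $(d-1)$-dimensional space and that \emph{both} projected families have size at least $d+1$, neither of which is automatic. You acknowledge this (``the induction must carry a hypothesis valid for all slice sizes'') but do not resolve it; the paper spends most of its length on the cases $\dim\operatorname{span}(\aaa_0)\le d-2$ and on small slices, and in the remaining main case the surplus $2d-4$ is absorbed only after proving $(|\aaa_0|-|\aaa_1|)\,|\bb_*|\ge 2d-4$, which requires switching the roles of $\aaa$ and $\bb$, projecting onto $\operatorname{span}(\bb_1)$, and re-choosing $b_d$ among a dual basis of that span---none of which is captured by a ``two-variable optimization over the slice sizes.''

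Third, your intended endgame is to show that the only configurations meeting the bound are the two named ones; this is false, and an induction that carries such a uniqueness statement cannot close. The paper's enumeration in dimension $4$ exhibits extremal size pairs $(6,12)$ and $(12,6)$ in addition to $(9,8)$ and $(8,9)$ (all with product $4\cdot 2^4+8=72$), and Example~\ref{generalCubeOctop} gives a whole family of near-extremal structures. The inequality itself does not need an equality characterization, so the fix is to drop that part of the plan and instead supply the missing case analysis above.
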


\noindent This theorem is in fact a tight stability result for the following theorem, which was the main result of~\cite{kupavskii22}.

\begin{restatable}{theorem}{oldMainTh}
    \label{d_plus_one_two_d}
    Let $\aaa,\bb \subseteq \R^d$ both linearly span $\R^d$ such that $\la a, b\ra \in \{0,1\}$ holds for all $a \in \aaa$, $b \in \bb$.
    Then we have $|\aaa| \cdot |\bb| \le (d+1) 2^d$.
\end{restatable}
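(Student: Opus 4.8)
The plan is to reduce the statement to a clean question about $0/1$ points in a low-dimensional subspace, to settle the extremal configuration directly, and then to run a rank/dimension induction to obtain the trade-off in general.

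First I would record the two basic bounds $|\aaa|\le 2^d$ and $|\bb|\le 2^d$. Since $\bb$ spans $\R^d$, it contains a basis $b_1,\dots,b_d$, and the map $a\mapsto(\la a,b_1\ra,\dots,\la a,b_d\ra)\in\{0,1\}^d$ is injective on $\aaa$, because a vector is determined by its products against a basis; hence $|\aaa|\le 2^d$, and symmetrically $|\bb|\le 2^d$. Writing $A$ and $B$ for the matrices whose rows are the elements of $\aaa$ and $\bb$, the hypothesis says $M:=AB^{\top}$ is a $0/1$ matrix, the spanning assumptions say $A,B$ have full column rank, so $M$ has distinct rows, distinct columns, and $\operatorname{rank}_{\R}M\le d$. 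Equivalently, via $a\mapsto Ba$, the family $\aaa$ embeds into $V\cap\{0,1\}^{n}$ for a $d$-dimensional subspace $V\subseteq\R^{n}$, with $n=|\bb|$.

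Next I would pin down the boundary case, which already explains the bound. If $|\aaa|=2^d$, then after a linear change of coordinates making a basis of $\bb$ standard, the condition $\la a,e_i\ra\in\{0,1\}$ forces $\aaa=\{0,1\}^d$. Each $b\in\bb$ then acts as a linear functional sending every point of $\{0,1\}^d$ into $\{0,1\}$; evaluating on $\zero$, on $e_i$, and on the pairs $e_i+e_j$ forces $b\in\{\zero,e_1,\dots,e_d\}$, so $|\bb|\le d+1$ and $|\aaa|\cdot|\bb|\le(d+1)2^d$. This matches the extremal configuration $\aaa=\{0,1\}^d$, $\bb=\{\zero,e_1,\dots,e_d\}$ and shows the role of the zero vector in tightness, which is exactly why Theorem~\ref{d2d_plus_2d} must exclude small families.

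For the general statement I would induct on $d$ (equivalently on $\operatorname{rank}M$). Pick $b^{*}\in\bb\setminus\{\zero\}$ and split $\aaa=\aaa_0\sqcup\aaa_1$ according to whether $\la a,b^{*}\ra$ equals $0$ or $1$; note $\aaa_1\ne\emptyset$, else $\aaa\subseteq(b^{*})^{\perp}$ fails to span. The part $\aaa_0$ lies in $(b^{*})^{\perp}\cong\R^{d-1}$, where together with the projection of $\bb$ it still has binary products, so it is governed by the inductive hypothesis in dimension $d-1$. The difficulty, and the heart of the proof, is twofold: the affine slab $\aaa_1$ produces differences $\la a-a',b\ra\in\{-1,0,1\}$, so it does not recurse as a binary configuration, and projecting $\bb$ into the hyperplane can collapse several vectors of $\bb$ onto one. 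I expect the main obstacle to be arranging the inductive step so that the possible growth of $|\bb|$ from the fibers of the projection, together with the contribution of $\aaa_1$, is charged exactly against the loss in $|\aaa|$ incurred by restricting to $(b^{*})^{\perp}$, so that the product never rises above $(d+1)2^d$. Carrying out this balanced bookkeeping — most plausibly by strengthening the inductive statement to track both factors simultaneously and by handling the zero vector separately so the additive ``$+1$'' is accounted for — is where essentially all of the work lies.
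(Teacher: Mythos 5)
Your setup is sound: the dual-basis embedding giving $|\aaa|,|\bb|\le 2^d$ is correct, and your analysis of the boundary case $|\aaa|=2^d$ (forcing $\aaa=\{0,1\}^d$ and then $\bb\subseteq\{\zero,e_1,\dots,e_d\}$) is a valid, if isolated, observation. Your choice of decomposition — pick $b^*\in\bb\setminus\{\zero\}$, split $\aaa=\aaa_0\sqcup\aaa_1$ by the value of $\la a,b^*\ra$, project $\bb$ onto the hyperplane $(b^*)^\perp$, and recurse — is exactly the paper's frame. But the proposal stops precisely where the proof begins: you state that the inductive step requires a ``balanced bookkeeping'' and that this is ``where essentially all of the work lies,'' without carrying it out. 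That is not a routine verification left to the reader; it is the entire content of the theorem, and the two obstacles you name (the affine slab $\aaa_1$ producing differences in $\{-1,0,1\}$, and the collapsing fibers of the projection) are real and are not resolved by any charging scheme of the naive kind you sketch.

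Concretely, the naive accounting provably cannot close. Even granting $|\aaa_0|\ge|\aaa_1|$ — which itself already needs the translation-and-sign-flip normalization of Claim~\ref{cl1}, since translating $\aaa_1$ to contain $\zero$ turns some scalar products into $-1$ and one must then flip vectors of $\bb$, which is why the ``no opposite points'' condition and the slice Lemma~\ref{lemslice} (subsets of $\{0,1\}^d\cup\{0,-1\}^d$ without opposite pairs have size at most $2^{\dim}$) enter at all — and granting the fiber bound $|\bb|\le 2|\pi(\bb)|$ of Claim~\ref{cl2}, one only gets $|\aaa||\bb|\le 4|\aaa_0||\pi(\bb)|\le 4\cdot d\,2^{d-1}=d\,2^{d+1}$, overshooting $(d+1)2^d$ by nearly a factor of two. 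Worse, the induction hypothesis does not even apply to the pair $\aaa_0$, $\pi(\bb)$, because $\aaa_0$ need not span $(b^*)^\perp$: the paper must project a second time onto $U_0=\operatorname{span}(\aaa_0)$ and prove the fiber estimate $|\pi(\bb)|\le 2^{d-1-\dim U_0}|\tau(\pi(\bb))|$ (Claim~\ref{cl3}), whose proof is the most delicate step and hinges on choosing $b^*=b_d$ to maximize $\max(\dim\aaa_0,\dim\aaa_1)$ and on constructing explicit vectors $b_1,b_2$ that contradict this maximality via inclusion-maximality of $\bb$ — your proposal fixes no such extremal choice of $b^*$. The doubly covered fibers $\bb\setminus\bb_*$ are then tamed by the partition $\bb_0\sqcup\bb_1$ with constancy on $\aaa_i$ (Claim~\ref{cl4}) and the bound $|\aaa_i||\bb_i|\le 2^d$ (Claim~\ref{cl5}), and the theorem closes only through Inequality~\ref{in1} and the case split on $\dim U_0$ in \eqref{final-old-step-smalldim} and \eqref{final-old-step-d-1}. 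None of these devices, nor substitutes for them, appear in the proposal, so the argument as written is incomplete at its core.
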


\noindent We give two examples that demonstrate tightness of the bound in Theorem \ref{d2d_plus_2d}.

\begin{example}\label{cubeOctop}
    Let $\{e_i\}$ be the standard basis of $\R^d$, 
    \begin{align*}
        \aaa=&\left\{\sum_{i=2}^{d}\delta_i e_i: \delta_i\in \{0,1\} \text{ for all }i\in [2,d]\right\}\cup\left\{e_1\right\},\\ 
        \bb=&\big\{\delta_1 e_1 + e_j: j\in [2,d] \text{ and }\delta_1\in \{0,1\}\big\} \cup \left\{e_1, 0\right\}.
    \end{align*}
    Here $\left|\mathcal{A}\right|=2^{d-1}+1$ and $\left|\mathcal{B}\right|=2d$.
\end{example}
The example above has both subsets within the binary cube, thus it can be interpreted as two families of sets $\aaa$ and $\bb$ such that $\forall A\in \aaa,\,B\in\bb$ we have $|\aaa\cap\bb|\in\{0,1\}$. This does not hold for the example below.
\begin{example}\label{crosspoly}
    Let $\{e_i\}$ be the standard basis of $\mathbb{R}^d$,
    \begin{align*}
        \aaa=&\left\{e_d+\sum_{i=1}^{d-1}\varepsilon_i e_i: \eps_i \in \{-1,1\} \text{ for all }i\in [d-1]\right\}\cup\left\{0\right\},\\
        \bb=&\left\{\frac{1}{2}\left(e_d+\varepsilon_i e_i\right): i\in [d], \eps_i\in \{-1,1\}\right\}.
    \end{align*}
    As in Example \ref{cubeOctop}, $\left|\aaa\right|=2^{d-1}+1$ and $\left|\bb\right|=2d$.
\end{example}

\paragraph{Outline}
The proof of Theorem~\ref{d2d_plus_2d} builds on the proof of Theorem~\ref{d_plus_one_two_d}, thus, in the next section we present the necessary claims and inequalities from \cite{kupavskii22}. In Section~\ref{sec31} we prove a baby variant of Theorem~\ref{d2d_plus_2d}, that gives uniqueness of the extremal example for Theorem~\ref{d_plus_one_two_d}. The structure of this proof is then reused in Section~\ref{sec32}, where we prove Theorem~\ref{d2d_plus_2d}. In Section~\ref{sec2level} we prove our main result, Theorem~\ref{two_level_new_bound}.
Proofs of claims from \cite{kupavskii22} are provided in Appendix~\ref{appendix} to make this paper is self-contained.

\paragraph{Discussion of the proofs} The proofs of our main results build on the proofs from \cite{kupavskii22}, but require several new ingredients, both combinatorial and, most importantly, geometric. The general idea is to project our families onto a certain subspace and make use of induction on the dimension of the ambient space. Unfortunately, there is quite a bit of case analysis involved, one reason being that there are actually many different configurations that are close to the bound in Theorem~\ref{d2d_plus_2d}. This is witnessed by some of our computer enumeration results below and by explicit constructions that are similar in spirit to Examples~\ref{cubeOctop} and~\ref{crosspoly}, for instance, Example~\ref{generalCubeOctop} discussed below. Filtering all of them out requires different considerations. Another reason is that Theorem~\ref{d2d_plus_2d} has a condition on the sizes of $\aaa$ and $\bb$, and thus before invoking induction hypothesis we must deal with the cases where one of the projected families is small. Geometrically, the most interesting cases are: 3c in the proof of Theorem~\ref{d2d_plus_2d}, where $\aaa$ and $\bb$ switch roles, and we have to study a projection onto a certain subspace formed by vectors of $\bb$, followed by adding a twist on the choice of the vector $b_d$, along which we project in order to use induction; the last case in the proof of Theorem~\ref{two_level_new_bound}, in which we reveal the exact geometric structure of $\aaa$ by reducing the problem to a simple question about families of subsets of $[d-1]$ with small pairwise differences.

By utilising some observations made at the beginning of Section~\ref{secStability} we were able to enumerate all families with binary scalar products (up to linear isomorphism of individual sets) in dimensions $d \leq 5$. The maximal (with respect to the product order on $\mathbb{N}\times\mathbb{N}$) pairs $(|\aaa|,|\bb|)$ for \mbox{$4$-dimensional} families are $(5, 16),\,(6, 12),\,(7, 10),\,(8, 9),\,(9, 8),\,(10, 7),\,(12, 6)$ and $(16, 5)$, which, together with examples above, demonstrates that the bound of $d 2^d + 2d$ might be achieved on sets of different structures (pairs $(6,12), (8,9), (9,8)$ and $(12,6)$ above). Figure~\ref{posSizesPlot} depicts all possible sizes of families in $\R^5$, Figure~\ref{pos-Min-Prod-Plot} shows the same data plotted with $|\aaa|\cdot|\bb|$ against $\operatorname{min}(|\aaa|,|\bb|)$ for clarity. 

\begin{figure}[!h]
\centering
\begin{tikzpicture}
\begin{axis}[
    xlabel={$|\aaa|$},
    ylabel={$|\bb|$},
    axis lines=left,
    xmin=3, xmax=34,
    ymin=3, ymax=34, 
    legend pos=north east,
    legend style={font=\scriptsize},
    width=0.5\textwidth,
    height=0.5\textwidth,
    label style={font=\small}
]
    
\addplot[domain=5:34, samples=100, color=black, dashed] {192/x};
\addlegendentry{$x\cdot y = (5+1)2^5$}

\addplot[only marks, color=black, mark=*, mark options={fill=black},mark size=1pt] 
    table {
    x y
    5 5
    5 6
    5 7
    5 8
    5 9
    5 10
    5 11
    5 12
    5 13
    5 14
    5 15
    5 16
    5 17
    5 18
    5 19
    5 20
    5 21
    5 22
    5 23
    5 24
    5 25
    5 26
    5 27
    5 28
    5 29
    5 30
    5 31
    5 32
    6 5
    6 6
    6 7
    6 8
    6 9
    6 10
    6 11
    6 12
    6 13
    6 14
    6 15
    6 16
    6 17
    6 18
    6 19
    6 20
    6 21
    6 22
    6 23
    6 24
    6 25
    6 26
    6 27
    6 28
    6 29
    6 30
    6 31
    6 32
    7 5
    7 6
    7 7
    7 8
    7 9
    7 10
    7 11
    7 12
    7 13
    7 14
    7 15
    7 16
    7 17
    7 18
    7 19
    7 20
    7 21
    7 22
    7 23
    7 24
    8 5
    8 6
    8 7
    8 8
    8 9
    8 10
    8 11
    8 12
    8 13
    8 14
    8 15
    8 16
    8 17
    8 18
    8 19
    8 20
    9 5
    9 6
    9 7
    9 8
    9 9
    9 10
    9 11
    9 12
    9 13
    9 14
    9 15
    9 16
    9 17
    9 18
    10 5
    10 6
    10 7
    10 8
    10 9
    10 10
    10 11
    10 12
    10 13
    10 14
    10 15
    10 16
    10 17
    11 5
    11 6
    11 7
    11 8
    11 9
    11 10
    11 11
    11 12
    11 13
    11 14
    12 5
    12 6
    12 7
    12 8
    12 9
    12 10
    12 11
    12 12
    12 13
    13 5
    13 6
    13 7
    13 8
    13 9
    13 10
    13 11
    13 12
    14 5
    14 6
    14 7
    14 8
    14 9
    14 10
    14 11
    15 5
    15 6
    15 7
    15 8
    15 9
    15 10
    16 5
    16 6
    16 7
    16 8
    16 9
    16 10
    17 5
    17 6
    17 7
    17 8
    17 9
    17 10
    18 5
    18 6
    18 7
    18 8
    18 9
    19 5
    19 6
    19 7
    19 8
    20 5
    20 6
    20 7
    20 8
    21 5
    21 6
    21 7
    22 5
    22 6
    22 7
    23 5
    23 6
    23 7
    24 5
    24 6
    24 7
    25 5
    25 6
    26 5
    26 6
    27 5
    27 6
    28 5
    28 6
    29 5
    29 6
    30 5
    30 6
    31 5
    31 6
    32 5
    32 6
    };

\end{axis}
\end{tikzpicture}
\caption{Possible sizes of families $\aaa$, $\bb$ that span $\R^5$ and have binary scalar products.}
\label{posSizesPlot}
\end{figure}
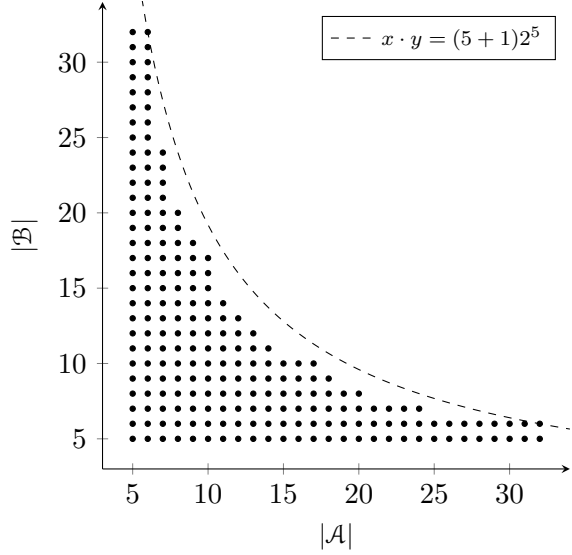

\begin{figure}[!h]
\centering
\begin{tikzpicture}
\begin{axis}[
    xlabel={$\operatorname{min}(|\aaa|,|\bb|)$},
    ylabel={$|\aaa|\cdot|\bb|$},
    axis lines=left,
    xmin=4, xmax=14,
    ymin=0, ymax=215, 
    legend pos=south east,
    legend style={font=\scriptsize},
    width=0.65\textwidth,
    height=0.5\textwidth,
    label style={font=\small}
]

\addplot[domain=3:13, samples=100, color=black, dashed] {192};
\addlegendentry{$y = (5+1)2^5$}

\addplot[domain=3:13, samples=100, color=black, line width=0.8pt, dotted] {170};
\addlegendentry{$y = 5\cdot2^5+2\cdot5$}

\addplot[only marks, color=black, mark=*, mark options={fill=black},mark size=1pt] 
    table {
    x y 
    5 25
    5 30
    5 35
    5 40
    5 45
    5 50
    5 55
    5 60
    5 65
    5 70
    5 75
    5 80
    5 85
    5 90
    5 95
    5 100
    5 105
    5 110
    5 115
    5 120
    5 125
    5 130
    5 135
    5 140
    5 145
    5 150
    5 155
    5 160
    6 36
    6 42
    6 48
    6 54
    6 60
    6 66
    6 72
    6 78
    6 84
    6 90
    6 96
    6 102
    6 108
    6 114
    6 120
    6 126
    6 132
    6 138
    6 144
    6 150
    6 156
    6 162
    6 168
    6 174
    6 180
    6 186
    6 192
    7 49
    7 56
    7 63
    7 70
    7 77
    7 84
    7 91
    7 98
    7 105
    7 112
    7 119
    7 126
    7 133
    7 140
    7 147
    7 154
    7 161
    7 168
    8 64
    8 72
    8 80
    8 88
    8 96
    8 104
    8 112
    8 120
    8 128
    8 136
    8 144
    8 152
    8 160
    9 81
    9 90
    9 99
    9 108
    9 117
    9 126
    9 135
    9 144
    9 153
    9 162
    10 100
    10 110
    10 120
    10 130
    10 140
    10 150
    10 160
    10 170
    11 121
    11 132
    11 143
    11 154
    12 144
    12 156
    };

\end{axis}
\end{tikzpicture}
\caption{$\operatorname{min}(|\aaa|, |\bb|)$ and $|\aaa||\bb|$ for families that span $\R^5$ and have binary scalar products.}
\label{pos-Min-Prod-Plot}
\end{figure}

The proof of Theorem~\ref{two_level_new_bound} makes use of Theorem~\ref{d2d_plus_2d} and a quick observation that $P$ is affinely isomorphic to the cube if it has too few facets to apply Theorem~\ref{d2d_plus_2d}. However, the case where $P$ has few vertices cannot be easily reduced to the case with few facets, as the set of possible pairs $\left(f_0(P), f_{d-1}(P)\right)$ for $2$-level $P$ is not symmetric. For example, a triangular prism in $\R^3$ is $2$-level, but there is no $3$-dimensional $2$-level $P$ satisfying $f_0(P)=5$ and $f_2(P)=6$.

\noindent We finish with  a conjecture that generalises Theorems~\ref{d_plus_one_two_d} and~\ref{d2d_plus_2d}.

\begin{conjecture}\label{generalisation}
    Let $\aaa,\bb \subseteq \R^d$ be families of vectors that both linearly span $\R^d$. Suppose that $\la a, b\ra \in \{0,1\}$ holds for all $a \in \aaa$, $b \in \bb$. Furthermore, suppose that $|\aaa|$ and $|\bb|$ are both strictly larger than $2^{k-1}(d-k+2)$ for some $k\in [0,d]$. Then $\left|\mathcal{A}\right| \cdot\left|\mathcal{B}\right| \leq (2^{d-k}+k)2^k(d-k+1)$.
\end{conjecture}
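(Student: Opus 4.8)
\emph{Strategy.} Write $B(d,k):=(2^{d-k}+k)\,2^{k}(d-k+1)$ for the conjectured bound, and for a given pair $(\aaa,\bb)$ let $k$ be the largest index in $[0,d]$ with $|\aaa|,|\bb|>2^{k-1}(d-k+2)$. The cases $k=0$ and $k=1$ are implied by Theorems~\ref{d_plus_one_two_d} and~\ref{d2d_plus_2d}, so the plan is to run an induction on $d$ whose step reduces the pair $(d,k)$ to $(d-1,k-1)$, keeping the ``cube dimension'' $d-k$ fixed, and to assume the full conjecture (all parameters) in dimension $d-1$. Before attempting the upper bound I would pin down what equality should look like by generalising Examples~\ref{cubeOctop} and~\ref{crosspoly}: split $[d]$ into a \emph{cube block} $T$ of size $d-k$ and a \emph{doubling block} $S$ of size $k$, and take
\[
\aaa=\Bigl\{\textstyle\sum_{i\in T}\delta_i e_i:\delta_i\in\{0,1\}\Bigr\}\cup\{e_i:i\in S\},\qquad \bb=\Bigl\{\textstyle\sum_{i\in S}\eps_i e_i+e_j:j\in T,\ \eps_i\in\{0,1\}\Bigr\}\cup\Bigl\{\textstyle\sum_{i\in S}\eps_i e_i:\eps_i\in\{0,1\}\Bigr\}.
\]
A direct check gives $\la a,b\ra\in\{0,1\}$, both families span, $|\aaa|=2^{d-k}+k$, $|\bb|=2^{k}(d-k+1)$, and $|\aaa|\cdot|\bb|=B(d,k)$. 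The feature that drives the induction is that $\bb$ is a \emph{prism} in the direction $v=e_{i_0}$ of any $i_0\in S$: it is a union of pairs $\{b,b+v\}$, so the orthogonal projection $\pi$ onto $v^{\perp}\cong\R^{d-1}$ is $2$-to-$1$ on $\bb$, while $\aaa$ loses exactly the single vector $e_{i_0}$, which merges with $\zero$.

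\emph{The inductive step.} Mimicking this, in general I would look for a direction $v$, set $\aaa_0=\{a\in\aaa:\la a,v\ra=0\}$, and let $\pi$ be the orthogonal projection onto $v^{\perp}$. For $a\in\aaa_0$ one has $\la a,\pi(b)\ra=\la a,b\ra-|v|^{-2}\la b,v\ra\la a,v\ra=\la a,b\ra\in\{0,1\}$, so $(\aaa_0,\pi(\bb))$ is again a binary-scalar-product pair in $\R^{d-1}$ and, once the thresholds are verified, the induction hypothesis at $(d-1,k-1)$ gives $|\aaa_0|\cdot|\pi(\bb)|\le B(d-1,k-1)$. Since $B(d,k)/B(d-1,k-1)=2\,(2^{d-k}+k)/(2^{d-k}+k-1)=2\bigl(1+\tfrac{1}{2^{d-k}+k-1}\bigr)$, the following three structural facts would close the step: (i) $|\aaa\setminus\aaa_0|\le 1$; (ii) $\pi$ is at most $2$-to-$1$ on $\bb$, so $|\bb|\le 2|\pi(\bb)|$; and (iii) $|\aaa_0|\ge 2^{d-k}+k-1$. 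Indeed, combining (i)--(iii) with the induction bound yields
\[
|\aaa|\cdot|\bb|\le 2\bigl(|\aaa_0|+1\bigr)|\pi(\bb)|\le 2B(d-1,k-1)\Bigl(1+\tfrac{1}{|\aaa_0|}\Bigr)\le 2B(d-1,k-1)\Bigl(1+\tfrac{1}{2^{d-k}+k-1}\Bigr)=B(d,k),
\]
and the thresholds needed to invoke induction, $|\aaa_0|,|\pi(\bb)|>2^{k-2}(d-k+2)$, follow from (i)--(ii) and the input bounds $|\aaa|,|\bb|>2^{k-1}(d-k+2)$.

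\emph{Main obstacles.} Producing a direction satisfying (i) and (ii) is already delicate: for a truly generic configuration no coordinate direction keeps $\aaa$ almost injective while splitting $\bb$ evenly, and, as the authors note already for $k=1$, many configurations sit just below $B(d,k)$ with genuinely different shapes, so isolating the good $v$ will demand substantial case analysis. I expect two recurring moves, both visible in the proof of Theorem~\ref{d2d_plus_2d}: allowing $\aaa$ and $\bb$ to exchange roles and projecting along a vector of $\bb$ (the analogue of its case~3c), and, when no direction carries a prism structure, arguing that the family is then too small to meet the threshold $2^{k-1}(d-k+2)$, a contradiction. The genuinely hard point, however, is fact (iii): the numerical induction \emph{does not} close without a lower bound on the surviving part $\aaa_0$, i.e.\ without ruling out ``$\bb$-heavy'' configurations in which $|\bb|$ is large at the expense of $|\aaa|$ (for instance $\bb\subseteq\{0,1\}^d$ forces $|\aaa|\le d+1$, which only meets the $\aaa$-threshold in a narrow range of $k$). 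Since $(2^{d-k}+k-1)\,2^{k-1}(d-k+1)$ need not be bounded by the threshold $2^{k-1}(d-k+2)$ when $d-k$ is large, establishing (iii) is essentially a \emph{stability} statement and is tantamount to the classification we are after: one must accompany the induction with a fine description of the near-extremal pairs in dimension $d-1$, strong enough to guarantee that anything lifting to a near-extremal pair in dimension $d$ has the split cube/doubling form, so that the reduction iterates cleanly through all intermediate $k$. Finally, because the hypothesis constrains the \emph{sizes} of $\aaa$ and $\bb$, the cases where a projected family drops below its threshold must be dispatched by hand before induction is invoked, exactly as for the single threshold $d+2$ in Theorem~\ref{d2d_plus_2d}; carrying this out uniformly in $k$ is where I anticipate most of the bookkeeping to concentrate.
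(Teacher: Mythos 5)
The statement you are asked to prove is stated in the paper as Conjecture~\ref{generalisation}: the authors offer no proof of it, only supporting evidence from computer enumeration in dimensions $d\le 5$ and from $2$-level polytopes up to dimension $8$. So there is no proof in the paper to compare against, and your proposal is not a proof either --- it is a strategy sketch, and to your credit you say so. Your extremal construction coincides with the paper's Example~\ref{generalCubeOctop}, and your ratio computation $B(d,k)/B(d-1,k-1)=2\bigl(1+\tfrac{1}{2^{d-k}+k-1}\bigr)$ together with the threshold bookkeeping for the $(d-1,k-1)$ induction hypothesis is arithmetically consistent. The induction scheme along the "doubling block" is a reasonable guess at how a proof might eventually go, and it is structurally parallel to how the paper handles $k=1$ (Theorem~\ref{d2d_plus_2d} via projection along $b_d$ and Claims~\ref{cl1}--\ref{cl5}).

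The concrete gaps are the three facts you label (i)--(iii), none of which is established. Fact (ii) is the only one with a ready analogue in the paper (Claim~\ref{cl2}, which itself needs the normalisation of Claim~\ref{cl1} and inclusion-wise maximality before it applies). Fact (i), that some direction $v$ has $|\aaa\setminus\aaa_0|\le 1$, is false for general configurations --- it is a feature of the extremal example, not a consequence of the hypotheses --- so your step would have to be replaced by a genuine dichotomy between "prism-like" and "non-prism-like" directions, which is where the paper's own $k=1$ argument already requires the long case analysis of Section~\ref{sec32} (cases on $\operatorname{dim}U_0$, the role-swap of $\aaa$ and $\bb$, the re-choice of $b_d$ in case 3c iv). Fact (iii), the lower bound $|\aaa_0|\ge 2^{d-k}+k-1$, is as you say equivalent to a stability/classification statement one level down, so the induction as set up is circular: to prove the bound at $(d,k)$ you need not just the bound but a structural description of near-extremal pairs at $(d-1,k-1)$, and no such description is available (the paper's Figures~\ref{posSizesPlot} and~\ref{pos-Min-Prod-Plot} show many incomparable near-extremal shapes even for $d=5$). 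In short: the proposal correctly identifies the shape a proof might take and correctly identifies why it does not close, but it does not prove the statement, and neither does the paper.
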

The motivating example for this conjecture is the following generalisation of Example~\ref{cubeOctop}:
\begin{example}\label{generalCubeOctop}
    Let $\{e_i\}$ be the standard basis of $\R^d$, $k\in [0,d]$,
    \begin{align*}
        \aaa= & \left\{\sum_{i=k+1}^{d}\delta_i e_i\right\}\cup\left\{e_1,\ldots,e_k\right\},\,\bb=\left\{\sum_{i=1}^{k}\delta_i e_i + e_j\right\}\cup\left\{\sum_{i=1}^{k}\delta_i e_i\right\}, \\
        & \text{ where } \delta_i\text{ range over }\left\{0, 1\right\} \text{ and }j\text{ over }[k+1,d].
    \end{align*}
    Here, $|\aaa|=2^{d-k}+k$ and $|\bb|=2^k (d-k+1)$.
\end{example}
We enumerated distinct sets with binary scalar products in dimensions up to 5, where `distinct' refers to an absence of linear isomorphism, and the results support Conjecture~\ref{generalisation}. The conjecture also holds for all sets that come from 2-level polytopes in dimensions up to 8.

\section{Preliminaries}\label{secStability}
\paragraph{Notation.}
In what follows, we will often treat vectors in $\R^d$ as points in an affine space, with $\operatorname{dim}$ always referring to the affine dimension while $\operatorname{span}$ referring to linear span. The set of integers  from $1$ to $n$ is denoted $[n]$. 

Let $\aaa, \bb$ be families of vectors that both linearly span $\R^d$ and have binary scalar products, that is, $\la a, b \ra \in \{0,1\}$ for all $a \in \aaa$ and $b \in \bb$. We will use the following two simple observations a few times throughout our proofs. Let $a_1, \ldots, a_d$ be a basis of $\R^d$ contained in $\aaa$. Consider the dual basis $a_1^*, \ldots, a_d^*$:

\[
    \la a_i, a_j^* \ra = 
    \begin{cases}
        1,\, i = j \\
        0,\, i\neq j
    \end{cases}
\]
and observe that elements of $\bb$ have $0/1$ coordinates when expressed in this dual basis, or, in other words, $\bb$ is a subset of what we would call a cube: 
\[
    \bb \subseteq \left\{\sum_{i=1}^d \delta_i a_i^*, \text{ where } \delta_i \text{ range over } \{0, 1\}\right\}.
\]
Another observation is that projecting one family on the linear span of a subset of another preserves the binary scalar products property: if $\aaa' \subseteq \aaa$ and $\pi_{\aaa'}: \R^d \rightarrow \operatorname{span}(\aaa')$ is the orthogonal projection, then
\[
    \forall a \in \aaa',\, b \in \bb:\: \la a, \pi_{\aaa'}(b) \ra = \la a, b \ra \in \{0,1\}. 
\]

\subsection{The setup from \cite{kupavskii22}}\label{secstability2} We will now introduce some notation and restate some claims proved in \cite{kupavskii22}. Proofs of those claims and inequalities are provided in the Appendix~\ref{appendix} for completeness.

\noindent Since we are interested in bounding the product $|\aaa||\bb|$ from above, we will assume that $\aaa$ and $\bb$ are inclusion-wise maximal with respect to the property of having binary scalar products and, in paritcular, $\zero \in \aaa,\bb$. Let $b_d \in \bb \setminus \{\zero\}$ be a vector with the maximum value of $\operatorname{max}(\operatorname{dim}\aaa_0,\operatorname{dim}\aaa_1)$, where 
\[
    \aaa_i=\left\{a \in \aaa:\left\langle a, b_d\right\rangle=i\right\} \text{ for } i=0, 1.
\]
The choice of $b_d$ among the vectors that maximise $\operatorname{max}(\operatorname{dim}\aaa_0,\operatorname{dim}\aaa_1)$, in cases where it is important, will be specified at a later stage. 
We denote the orthogonal projection onto $U=b_d^\bot$ by $\pi: \mathbb{R}^d \rightarrow U$. We say that $X\subset \R^d$ does not contain opposite points if  $\{x,-x\}\subseteq X$ is only possible if $x=\zero$. Below, we state the claims and inequalities from \cite{kupavskii22}.

\begin{restatable}{claim}{claimassumptions}
    \label{cl1}
    We may translate $\aaa$ and replace some points $b\in\bb$ by the opposites $-b$ such that the following properties hold.
    \begin{itemize}
        \item[(i)] We (still) have $\aaa = \aaa_0 \cup \aaa_1$, where $\aaa_i = \{ a \in \aaa : \la a, b_d\ra = i\}$ for $i=0,1$ such that
            \begin{equation} \label{eqa0gea1}
                |\aaa_0| \ge |\aaa_1|.
            \end{equation}
        \item[(ii)] We  have
            \begin{equation}
                \label{eqscala0}
                \la a, b\ra \in \{0,1\} \text{ for each } a \in \aaa_0 \text{ and } b \in \bb.
            \end{equation}
        \item[(iii)] The set $\pi(\bb)$ does not contain opposite points.
    \end{itemize}
\end{restatable}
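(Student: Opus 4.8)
The plan is to reduce everything to a single structure-preserving operation and then two clean-up steps. For $a_0\in\aaa$, let $T_{a_0}$ denote the operation that translates $\aaa$ to $\aaa-a_0$ and simultaneously negates every $b\in\bb$ with $\la a_0,b\ra=1$. I would first check that $T_{a_0}$ preserves the global binary-product property: if $\la a_0,b\ra=0$ then $\la a-a_0,b\ra=\la a,b\ra\in\{0,1\}$, while if $\la a_0,b\ra=1$ then $\la a-a_0,-b\ra=1-\la a,b\ra\in\{0,1\}$. It also keeps $\zero\in\aaa$ (since $a_0\mapsto\zero$) and $\zero\in\bb$. This is essentially the only operation I will use, so all three conclusions will be obtained by applying $T_{a_0}$ for a suitable $a_0$ and then performing additional sign flips that are harmless for the relevant products.

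For (i), I would track how $T_{a_0}$ acts on the partition $\aaa=\aaa_0\cup\aaa_1$ determined by $b_d$. If $\la a_0,b_d\ra=0$ the two parts are merely translated; if $\la a_0,b_d\ra=1$ then $b_d$ itself is negated and $\la a-a_0,-b_d\ra=1-\la a,b_d\ra$, which interchanges $\aaa_0$ and $\aaa_1$. Hence, should $|\aaa_0|<|\aaa_1|$ hold, I pick any $a_0\in\aaa_1$ (nonempty, as then $|\aaa_1|\ge1$) and apply $T_{a_0}$: the parts swap and $|\aaa_0|\ge|\aaa_1|$ results. The swap leaves $\max(\operatorname{dim}\aaa_0,\operatorname{dim}\aaa_1)$ unchanged, so $b_d$ remains a legitimate maximiser. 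Because only a global binary-product-preserving operation has been used, (ii) holds automatically at this stage; in fact $\la a,b\ra\in\{0,1\}$ for all $a\in\aaa$, not just $a\in\aaa_0$.

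It remains to secure (iii) without disturbing (i) or (ii). Here the guiding observation is geometric: since $\aaa_0\subseteq U=b_d^\bot$, for $a\in\aaa_0$ we have $\la a,b\ra=\la a,\pi(b)\ra$, so $\aaa_0$ and $\pi(\bb)$ have binary products inside $U$. Consequently, if $x,-x\in\pi(\bb)$ with $x\neq\zero$, then for every $a\in\aaa_0$ both $\la a,x\ra$ and $-\la a,x\ra$ lie in $\{0,1\}$, forcing $\la a,x\ra=0$; thus any opposite pair of projections is orthogonal to $\operatorname{span}(\aaa_0)$. Negating a $b$ whose projection belongs to such a pair therefore changes no product with $\aaa_0$, so (ii) is preserved by exactly the flips I need. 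To carry this out I fix a generic linear functional $\ell$ on $U$ that is nonzero on every nonzero element of $\pi(\bb)$, and I negate each $b$ with $\pi(b)\neq\zero$, $\ell(\pi(b))<0$, and $-\pi(b)\in\pi(\bb)$. Since $\pi(b_d)=\zero$, the vector $b_d$ is untouched, so $\aaa_0,\aaa_1$ and conclusion (i) are unaffected.

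The step that needs genuine care — and which I view as the only real obstacle — is verifying that these last flips actually eliminate all opposite points of $\pi(\bb)$ while creating none. This comes down to a short case analysis on the sign of $\ell(\pi(b))$ and on whether $-\pi(b)\in\pi(\bb)$: one checks that after the flips every surviving member of an opposite pair sits on the side $\ell>0$, so that $y$ and $-y$ cannot both survive. Given the orthogonality observation above, no interaction with (ii) can occur, and (i) is protected because $b_d$ is never flipped; the remainder is routine bookkeeping.
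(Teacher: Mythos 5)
Your proof is correct, and while the overall strategy (a translation of $\aaa$ plus sign flips on $\bb$) is the same as the paper's, the mechanism differs in two ways. First, your operation $T_{a_0}$ negates \emph{every} $b$ with $\la a_0,b\ra=1$ at the moment of translating, which preserves the full property $\la a,b\ra\in\{0,1\}$ for all of $\aaa$ and makes (ii) automatic; the paper negates only $b_d$, accepts that the products degrade to $\{0,\eps_b\}$, and restores (ii) by a second round of flips normalised against $\aaa_0$. Second, and more substantively, your route to (iii) is genuinely different: the paper orients each remaining $b$ positively against $\aaa_0$ when possible and against a translate $\aaa_1'$ of $\aaa_1$ otherwise, and then deduces (iii) by showing any two proportional nonzero projections must have ratio $1$; you instead observe that the members of any opposite pair in $\pi(\bb)$ are necessarily orthogonal to $\operatorname{span}(\aaa_0)$ (so flipping them is invisible to (ii)) and break each pair with a generic functional $\ell$. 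The case analysis you defer does close: if $y,-y$ both survived with $\ell(y)>0$, then $-y$ can only come from an unflipped preimage (a flipped one would need $\ell(y)<0$), which forces $y\notin\pi(\bb)$ before the flips, after which $y$ can neither survive unflipped nor be created by a flip. The paper's canonical orientation has the side benefit of directly setting up the signs $\eps_b,\gamma_b$ used later in the proof of Claim~\ref{cl3}, but those are re-derived after the claim anyway, so your argument loses nothing. One point you leave implicit and should state: none of your negations can collide with an existing element of $\bb$ (which would shrink $|\bb|$), because $\bb$ cannot contain a nonzero opposite pair once the full binary-product property holds and $\aaa$ spans $\R^d$.
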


\begin{restatable}{claim}{claimpreimagespi}\label{cl2}
   Every point in $\pi(\bb)$ has at most two preimages in $\bb$.
\end{restatable}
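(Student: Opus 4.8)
The plan is to exploit the fact that the fibers of the orthogonal projection $\pi$ onto $U=b_d^\bot$ are exactly the lines parallel to $b_d$. Concretely, if $b, b' \in \bb$ satisfy $\pi(b) = \pi(b')$, then $b - b' \in \ker \pi = \operatorname{span}(b_d)$, so $b' = b + s\, b_d$ for some scalar $s$, which is nonzero precisely when $b \neq b'$. Thus a point $y \in \pi(\bb)$ with three or more distinct preimages would yield three members $b,\, b + s\, b_d,\, b + t\, b_d \in \bb$ with $0, s, t$ pairwise distinct.

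The key observation is that a single well-chosen vector of $\aaa$ already rules this out. Because $\aaa$ linearly spans $\R^d$ and $b_d \neq \zero$, there must exist $a \in \aaa$ with $\la a, b_d \ra \neq 0$; since all scalar products are binary, in fact $\la a, b_d \ra = 1$, that is, $a \in \aaa_1$. For such $a$ and any preimage $b + r\, b_d$ we have $\la a,\, b + r\, b_d \ra = \la a, b \ra + r$. Evaluating on the three candidate preimages therefore gives the values $\la a, b \ra$, $\la a, b \ra + s$ and $\la a, b \ra + t$, which are pairwise distinct since $0, s, t$ are; yet all three must lie in $\{0,1\}$ by the binary scalar product property, and $\{0,1\}$ has only two elements. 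This contradiction shows that no point of $\pi(\bb)$ has more than two preimages.

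There is essentially no hard step here: the only thing requiring verification is the existence of a vector in $\aaa_1$, which is immediate from $\aaa$ spanning $\R^d$ together with $b_d \neq \zero$. Conceptually the argument is a rigidity statement — an affine function restricted to a line meets the two-element set $\{0,1\}$ in at most two points — so the binary scalar products cap the number of members of $\bb$ that are collinear in the $b_d$-direction at two.
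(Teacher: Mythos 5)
Your core idea --- restrict to the fiber $\pi^{-1}(y)$, which is a line in the direction of $b_d$, and use a single $a$ with $\la a, b_d\ra = 1$ as a functional that separates the points of that fiber --- is the right one, and it is essentially the one-dimensional case of the paper's Lemma~\ref{lemsliceb}. But there is a genuine gap at the step ``yet all three must lie in $\{0,1\}$''. Claim~\ref{cl2} is stated and used \emph{after} the normalization of Claim~\ref{cl1}, which translates $\aaa$ and replaces some $b \in \bb$ by $-b$. After that normalization the hypothesis $\la a,b\ra \in \{0,1\}$ no longer holds for all pairs: property~(ii) of Claim~\ref{cl1} guarantees it only for $a \in \aaa_0$, while for $a \in \aaa_1$ --- exactly the vectors you must use, since you need $\la a, b_d\ra \ne 0$ --- one only has $\la a, b\ra \in \{0, \eps_b\}$ with a sign $\eps_b \in \{\pm 1\}$ depending on $b$ (this is~\eqref{eqscaleps}). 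So your three values $\la a,b\ra$, $\la a,b\ra + s$, $\la a,b\ra + t$ are pairwise distinct elements of the \emph{three}-element set $\{-1,0,1\}$, which is not yet a contradiction.

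The configuration left to exclude is the one where these values are exactly $-1,0,1$. Chasing it through (now using all of $\aaa_0\cup\aaa_1$, the per-vector signs $\eps_b$, and the fact that $\aaa$ spans $\R^d$) forces the three preimages to be $-b_d$, $\zero$, $b_d$, and one must then separately rule out $\{b_d,-b_d\}\subseteq\bb$. This extra bookkeeping is precisely what the paper's Lemma~\ref{lemsliceb} packages: written in a dual basis, the fiber becomes a subset of $\{0,1\}^d\cup\{0,-1\}^d$ of affine dimension at most $1$ with no opposite points, hence of size at most $2$. Note also that you cannot sidestep the issue by proving the claim for the original, un-normalized families, where your argument is valid: the sign flips of Claim~\ref{cl1} move a point from the fiber over $\pi(b)$ to the fiber over $-\pi(b)$, so a bound of $2$ per fiber before normalization only yields a bound of $4$ afterwards.
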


We denote the linear span of $\aaa_0$ by $U_0$ and define the orthogonal projection $\tau: U \rightarrow U_0$. Let $\bb_* \subseteq \bb$ be the set of $b \in \bb$ for which $\pi(b)$ has exactly one preimage under projection onto $U$.

\begin{restatable}{inequality}{ineqbasic}\label{in0}
   $\left|\aaa\right|\left|\bb\right| \leq 2\left|\aaa_0\right||\pi(\bb)|+\left|\aaa_1\right|\left|\bb \backslash \bb_*\right|$.
\end{restatable}

\begin{restatable}{claim}{claimpreimagestau}\label{cl3}
    $\left|\pi(\bb)\right| \leq 2^{d-1-\operatorname{dim} U_0}\left|\tau(\pi(\bb))\right|$.
\end{restatable}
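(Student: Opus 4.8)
The plan is to reduce the statement to a fiberwise cube bound. Since $U_0 \subseteq U$, the composite $\tau \circ \pi$ is simply the orthogonal projection of $\R^d$ onto $U_0$, so $\tau(\pi(\bb))$ indexes the fibers of $\tau$ on $\pi(\bb)$, and the inequality is equivalent to the assertion that every nonempty such fiber contains at most $2^{d-1-\operatorname{dim}U_0}$ points. Writing $W = U_0^{\perp}\cap U$, so that $U = U_0 \oplus W$ and $\operatorname{dim}W = (d-1)-\operatorname{dim}U_0$, each fiber lies in a translate $y+W$, and it suffices to bound the number of points of $\pi(\bb)$ in one such translate.

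To bound a single fiber I would embed it into a Boolean cube of dimension $\operatorname{dim}W$ by exhibiting $\operatorname{dim}W$ linearly independent functionals on $W$ that take only two values on the fiber; the cube observation from the start of Section~\ref{secStability} then yields the count $2^{\operatorname{dim}W}$. A basis of $U_0$ drawn from $\aaa_0$ already isolates the fiber (these functionals are $\{0,1\}$-valued on $\pi(\bb)$ and constant along $W$), so all of the work lies in the $W$-directions. The natural source of functionals there is $\aaa_1$: fixing $a^\star\in\aaa_1$, the differences $a-a^\star$ with $a\in\aaa_1$ lie in $U$, their images span $W$ modulo $U_0$, and $\la a-a^\star,\pi(b)\ra=\la a,b\ra-\la a^\star,b\ra$. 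The main obstacle is that these differences are a priori only $\{-1,0,1\}$-valued, which would give the far weaker bound $3^{\operatorname{dim}W}$; collapsing three values to two is the heart of the matter. Here I would invoke the structure not yet used: Claim~\ref{cl2}, that every point of $\pi(\bb)$ has at most two preimages (so the $b_d$-coordinate forgotten by $\pi$ is nearly determined), Claim~\ref{cl1}(iii), which forbids opposite points in $\pi(\bb)$, and the fact that all vectors of $\aaa_1$ share the same $b_d$-component. With a suitable choice of $a^\star$ and a consistent orientation of each functional, I expect these to force the values into $\{0,1\}$ on each fiber, furnishing the cube.

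An equivalent and perhaps cleaner route is fully combinatorial. Choosing a basis of $\R^d$ inside $\aaa$ whose first $\operatorname{dim}U_0$ vectors lie in $\aaa_0$ realises $\bb$ as a subset of $\{0,1\}^d$ in the dual coordinates, with $b_d$ equal to the indicator of the last $d-\operatorname{dim}U_0$ coordinates. In these coordinates $\tau\circ\pi$ records the first $\operatorname{dim}U_0$ coordinates (the ``prefix''), while $\pi$ identifies exactly the all-zero and all-one completions of each fixed prefix and nothing else. The task then becomes showing that, for each fixed prefix, the admissible completions, modulo this single identification, number at most $2^{d-1-\operatorname{dim}U_0}$, which I would extract from the binary-scalar-product constraints imposed by the remaining vectors of $\aaa_1$ together with the extremal choice of $b_d$ maximizing $\max(\operatorname{dim}\aaa_0,\operatorname{dim}\aaa_1)$. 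In either formulation the decisive and only genuinely delicate step is the passage from three possible values to two.
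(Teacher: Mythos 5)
Your opening reduction is the same as the paper's: fix $v=\pi(b)$, let $W$ be the orthogonal complement of $U_0$ in $U$, and bound each fiber $(v+W)\cap\pi(\bb)$ by $2^{\dim W}=2^{d-1-\dim U_0}$. But the proposal stops exactly where the real work begins. The step you defer as ``the heart of the matter'' is not carried out, and the mechanism you hope for --- choosing $a^\star$ and orientations so that each difference functional $a-a^\star$, $a\in\aaa_1$, becomes $\{0,1\}$-valued on the fiber --- is not what happens and is generally unachievable: for a fixed $b$ the values $\la a-a^\star,\pi(b)\ra=\la a,b\ra-\la a^\star,b\ra$ lie in $\{0,\eps_b\}$ or $\{0,-\eps_b\}$ with a sign that changes from one $b$ to the next, so a single functional can take all three values $-1,0,1$ across the fiber. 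What the paper actually uses is Lemma~\ref{lemslice}: a subset of $\{0,1\}^n\cup\{0,-1\}^n$ \emph{with no opposite points} has size at most $2^{\dim}$. That lemma tolerates the per-point sign flip rather than eliminating it, and its no-opposite-points hypothesis is supplied by Claim~\ref{cl1}(iii) --- but only if the fiber is coordinatized by functionals spanning the full linear span $\Pi=\operatorname{span}(v,W)$ of the affine fiber. Your functionals are only guaranteed to span $W^{*}$; with respect to such a system, two points of $v+W$ having opposite coordinate vectors need not be opposite points of $\pi(\bb)$, so Claim~\ref{cl1}(iii) cannot be imported and the count degrades to $3^{\dim W}$.

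This points to the larger omission: when $v\notin W$, the space $\Pi$ has dimension $\dim W+1$, and $\sigma(\aaa_1')$ (the projection of the translated $\aaa_1$ onto $\Pi$) may span only a hyperplane of it. Adjoining functionals from $\aaa_0$ to fill the missing direction destroys the per-point sign consistency needed for Lemma~\ref{lemsliceb}, so the cube bound is simply unavailable in that case. The paper instead shows this case is vacuous unless the fiber is a singleton, by constructing explicit vectors $b_1,b_2$ which, by inclusion-wise maximality of $\bb$, must belong to $\bb$ and satisfy $\max(\varphi(b_1),\varphi(b_2))>\varphi(b_d)$, contradicting the extremal choice of $b_d$. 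This construction is the bulk of the proof of Claim~\ref{cl3} and is the only place the extremality of $b_d$ is used; your proposal names that extremality as an ingredient but gives no mechanism for it. Both your geometric and your combinatorial formulations therefore contain the same unproved core, and as written neither improves on the trivial $3^{\dim W}$ per fiber.
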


\begin{restatable}{claim}{claimrestbbconstant}\label{cl4}
    $\bb \backslash \bb_* $ can be partitioned as $ \bb_0 \sqcup \bb_1$, with $\bb_0, \bb_1$ satisfying
    \[
        \forall b\in\bb_i: \left|\left\{\langle a, b\rangle: a \in \aaa_i\right\}\right|=1\text{ for }i=0, 1.
    \]
\end{restatable}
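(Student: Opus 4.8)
The plan is to study, for a point of $\pi(\bb)$ that has two preimages, how those two preimages relate to $b_d$, and to show that each of them is automatically constant on $\aaa_1$. This will let me take $\bb_1 := \bb \setminus \bb_*$ and $\bb_0 := \emptyset$ (a partition is allowed to have an empty block), so the whole content of the claim reduces to a short binary-arithmetic observation.

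First I would observe that, by definition of $\bb_*$, a vector $b \in \bb \setminus \bb_*$ is exactly one whose image $\pi(b)$ has more than one preimage in $\bb$, hence \emph{exactly} two by Claim~\ref{cl2}; write $b'$ for the second preimage. Since the fibres of $\pi$ are cosets of $\ker\pi = \operatorname{span}(b_d)$, we get $b' - b = s\,b_d$ for some scalar $s \neq 0$.

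The key step is to evaluate this against $\aaa_1$. Because $\aaa$ spans $\R^d$ and $b_d \neq \zero$, some $a \in \aaa$ has $\la a, b_d\ra = 1$, so $\aaa_1 \neq \emptyset$. For every $a \in \aaa_1$ we have $\la a, b'\ra - \la a, b\ra = s\la a, b_d\ra = s$; as both scalar products lie in $\{0,1\}$ their difference lies in $\{-1,0,1\}$, and $s \neq 0$ forces $s \in \{-1,1\}$. Relabelling $b,b'$ if necessary so that $b' = b + b_d$, the identity $\la a, b'\ra = \la a, b\ra + 1$ with both sides in $\{0,1\}$ forces $\la a, b\ra = 0$ and $\la a, b'\ra = 1$ for every $a \in \aaa_1$. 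Thus each of $b, b'$ takes a value on $\aaa_1$ that is independent of the choice of $a \in \aaa_1$, i.e.\ each is constant on $\aaa_1$.

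Applying this to every such pair of coinciding preimages shows that all of $\bb \setminus \bb_*$ is constant on $\aaa_1$, so $\bb_1 := \bb \setminus \bb_*$, $\bb_0 := \emptyset$ is a valid partition satisfying the two requirements (the $i=0$ condition holding vacuously). The only point that really needs care — and what I would flag as the crux — is that \emph{all} of the constancy is driven by $\aaa_1$: the constraints from $\aaa_0$, where $\la a, b_d\ra = 0$, only tell us that the two preimages agree on $\aaa_0$ and carry no information about constancy across $\aaa_0$. This is precisely why the statement is asymmetric and why one block of the partition may be taken empty.
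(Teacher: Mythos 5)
There is a genuine gap, and it sits exactly at the step you flagged as the crux. You assume that for $a \in \aaa_1$ both $\la a,b\ra$ and $\la a,b'\ra$ lie in $\{0,1\}$. But Claim~\ref{cl1} only guarantees this for $a \in \aaa_0$ (property (ii), i.e.\ \eqref{eqscala0}): the normalization there translates $\aaa$ by some $a_*$ with $\la a_*,b_d\ra=1$ and flips some elements of $\bb$, after which the scalar products of a general $a\in\aaa$ with $b\in\bb$ are only known to lie in $\{0,\eps_b\}$ for some sign $\eps_b\in\{\pm1\}$ depending on $b$ (see \eqref{eqscaleps} in the appendix). With the correct ranges, the identity $\la a,b'\ra=\la a,b\ra+s$ on $\aaa_1$ no longer forces constancy: e.g.\ $\eps_b=1$, $\eps_{b'}=-1$, $s=-1$ is consistent with $b$ taking both values $0$ and $1$ on $\aaa_1$ (and $b'$ taking both $0$ and $-1$). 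So your conclusion that every $b\in\bb\setminus\bb_*$ is constant on $\aaa_1$, and hence that one may always take $\bb_0=\varnothing$, is not established. It is also at odds with the rest of the paper: inequality \eqref{stronger5} and case 2(a)(ii) of the proof of Theorem~\ref{d2d_plus_2d} explicitly handle $|\bb_0|=2$, and the elements of $\bb_0$ there are vectors orthogonal to $U_0$ (constant $0$ on $\aaa_0$) that need not be constant on $\aaa_1$.

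The paper's proof establishes only the disjunction that the claim actually needs: arguing by contradiction, it supposes $b$ is non-constant on \emph{both} $\aaa_0$ and $\aaa_1$. Non-constancy on $\aaa_0$ together with \eqref{eqscala0} pins down $\eps_b=\eps_{b'}=1$ (this is the step that supplies the $\{0,1\}$ range you were missing, and it is only available because $b$ is assumed non-constant on $\aaa_0$), and then the translation identity on $\aaa_1$ forces $\gamma=0$, a contradiction. Each $b\in\bb\setminus\bb_*$ is therefore constant on $\aaa_0$ or on $\aaa_1$, which gives the partition but not the vanishing of $\bb_0$. Your fibre analysis ($b'-b=s\,b_d$, $s\neq 0$, evaluation against $\aaa_1$ where $\la a,b_d\ra=1$) is the right mechanism; what is missing is the preliminary use of the $\aaa_0$-side normalization to control the signs $\eps_b,\eps_{b'}$ before that mechanism can be applied.
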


Due to Claim \ref{cl4} and \eqref{eqa0gea1}, the final term in Inequality \ref{in0} can be bounded as \[ |\aaa_1||\bb \backslash \bb_*| = |\aaa_1||\bb_0|+|\aaa_1||\bb_1| \leq |\aaa_0||\bb_0|+|\aaa_1||\bb_1|.\] Using this and applying Theorem~\ref{d_plus_one_two_d} to bound the first term in Inequality \ref{in0}, we obtain
\begin{restatable}{inequality}{ineqkey}\label{in1}
    $\left|\aaa\right| \cdot\left|\bb\right| \leq \left(\operatorname{dim} U_0+1\right) 2^d+\left|\aaa_0\right|\left|\bb_0\right|+\left|\aaa_1\right|\left|\bb_1\right|.$
\end{restatable}
\begin{restatable}{inequality}{ineqforclfive}\label{ineqForCl5}
    For $i = 0,1$ we have 
    \[
        |\aaa_i| \leq 2^{\operatorname{dim}(\aaa_i)},\;|\bb_i| \leq 2^{\operatorname{dim}(\operatorname{span}(\bb_i))} \text{, and } \operatorname{dim}(\aaa_i) + \operatorname{dim}\bigl(\operatorname{span}(\bb_i)\bigl) \leq d.
    \]
\end{restatable}

\begin{restatable}{claim}{claimaaaibbi}\label{cl5}
    For $i=0, 1$, we have $\left|\aaa_i\right|\left|\bb_i\right| \leq 2^d$.
\end{restatable}

\noindent Looking at the definition of $\bb_i$, we see that we can either assume  $\zero, b_d \in \bb_0$ or assume $\zero, b_d \in \bb_1$. Here and in what follows we assume that $\zero, b_d \in \bb_1$. Therefore, Claim \ref{cl5} actually implies
\begin{equation}\label{stronger5}
    \left|\aaa_1\right|\left|\bb_1\right| \leq 2^d,\; \left|\aaa_0\right|\left(\left|\bb_0\right|+2\right)\leq 2^d.
\end{equation}

\noindent Inequality~\ref{in1} an Claim~\ref{cl5} are used in~\cite{kupavskii22} to prove Theorem~\ref{d_plus_one_two_d} as follows.\\ 
If $\operatorname{dim}U_0 \leq d-2$, 
\begin{equation}\label{final-old-step-smalldim}
    |\aaa| |\bb| \leq (\operatorname{dim}U_0 + 1)2^d + |\aaa_0||\bb_0|+|\aaa_1||\bb_1| \leq (d-1)2^d + 2^d + 2^d = (d+1)2^d.
\end{equation}
If $\operatorname{dim}U_0 = d-1$, Inequality~\ref{ineqForCl5} and $\zero, b_d \in \bb_1$ implies $\bb_0 = \varnothing$, and we have
\begin{equation}\label{final-old-step-d-1}
    |\aaa| |\bb| \leq (\operatorname{dim}U_0 + 1)2^d + |\aaa_1||\bb_1| \leq d 2^d + 2^d = (d+1)2^d.
\end{equation}

\subsection{Auxiliary results}
In this section, we present several simple lemmas that are needed throughout the proofs. 
\begin{restatable}{inequality}{dfInequality}\label{in2}
    For an integer $2\leq f\leq d$, we have \[(d+f)(2^{d-1}+2^{d-f})\leq d2^d + 2d.\]
\end{restatable}

\begin{proof}
    We will prove this by induction on $d$: when $d=f$, the inequality holds with equality. Assuming that the statement is valid for $d$, let us verify it for  $d+1$. Denoting the left and right sides of the inequality as $l(d, f)$ and $r(d)$, respectively, we have
    \begin{equation*}
        \begin{split}
            r(d+1) - l(d+1, f) 
            & \geq \left(r(d+1) - r(d)\right) - \left(l(d+1, f) - l(d, f)\right) \\
            & = \left(d2^d + 2^{d+1} + 2\right) - \left(d+f+2\right)\left(2^{d-1}+2^{d-f}\right) \\
            & = 2^{d-f}\left(d-f+2\right)\left(2^{f-1}-1-\frac{2f}{d-f+2}\right)+2 \\
            & \geq 2^{d-f}\left(d-f+2\right)\left(2^{f-1}-1-f\right)
        \end{split}
    \end{equation*}
    The obtained expression is non-negative for $f>2$. For $f=2$ and $d\geq4$, we have $2^{f-1}-1-\frac{2f}{d-f+2}\geq0$, and for $f=2$ and $d=2,3$, the initial inequality can be checked explicitly.    
\end{proof}

\begin{restatable}[]{lemma}{sufficientFractionLemma}\label{78sufficientLemma}
    Let $S_1, S_2 \subseteq [d-1]$ be such that $|S_2 \setminus S_1| > 1$. Then the family 
    \[
        \bigl\{S \subseteq [d-1]:\: |S\cap S_2|-|S\cap S_1| \in \{-1, 0, 1\}\bigr\}
    \]
    contains at most $\frac{7}{8}\cdot 2^{d-1}$ sets. 
\end{restatable}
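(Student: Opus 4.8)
The plan is to reduce the statement to a clean inequality about three consecutive binomial coefficients. First I would partition the ground set $[d-1]$ into the four Venn regions $A=S_1\cap S_2$, $B=S_1\setminus S_2$, $C=S_2\setminus S_1$ and $D=[d-1]\setminus(S_1\cup S_2)$. For any $S$, writing $b=|S\cap B|$ and $c=|S\cap C|$, one has $|S\cap S_1|=|S\cap A|+b$ and $|S\cap S_2|=|S\cap A|+c$, so $|S\cap S_2|-|S\cap S_1|=c-b$. Thus the defining condition depends only on $S\cap B$ and $S\cap C$, while the intersections with $A$ and $D$ are free. Hence the family has exactly $2^{|A|+|D|}\cdot N(p,q)$ members, where $p=|B|$, $q=|C|=|S_2\setminus S_1|\ge 2$, and $N(p,q)=\#\{(T_B,T_C):T_B\subseteq B,\ T_C\subseteq C,\ \big||T_C|-|T_B|\big|\le 1\}$. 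Since $2^{d-1}=2^{|A|+|D|}\cdot 2^{p+q}$, it suffices to prove $N(p,q)\le \tfrac78\,2^{p+q}$ whenever $q\ge 2$.

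The key observation is that complementing within $B$ turns $N(p,q)$ into a sum of three consecutive binomial coefficients. Replacing $T_B$ by $B\setminus T_B$ sends a pair of sizes $(b,c)$ to one of sizes $(p-b,c)$; the condition $|c-b|\le 1$ becomes $\big|(p-b)+c-p\big|\le 1$, i.e.\ the total size $(p-b)+c$ lies in $\{p-1,p,p+1\}$. As $T_B\mapsto B\setminus T_B$ is a bijection and $(B\setminus T_B)\sqcup T_C$ ranges over all subsets of the $n$-element set $B\cup C$ (with $n=p+q$), this yields the exact formula
\[
    N(p,q)=\binom{n}{p-1}+\binom{n}{p}+\binom{n}{p+1}.
\]
So the whole lemma reduces to showing that any three consecutive binomial coefficients in row $n$ sum to at most $\tfrac78\,2^n$, together with a check of the boundary where the hypothesis $q\ge 2$ matters.

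For this final inequality I would bound the left-hand side by $g(n):=\max_k\big(\binom{n}{k-1}+\binom{n}{k}+\binom{n}{k+1}\big)$, which by unimodality is attained at the central window. Using Pascal's rule and the symmetry of a binomial row, I expect the two clean relations $g(2m)=2\,g(2m-1)$ and $g(2m+1)\le 2\,g(2m)$, the latter reducing to $\binom{2m}{m+2}\le\binom{2m}{m+1}$; together they give $g(n+1)\le 2\,g(n)$, so $g(n)/2^n$ is non-increasing. Anchoring at $g(3)=7=\tfrac78\cdot 2^3$ and $g(4)=14=\tfrac78\cdot 2^4$ then yields $g(n)\le \tfrac78\,2^n$ for every $n\ge 3$, hence $N(p,q)\le\tfrac78\,2^{p+q}$ for all such $n$. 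The remaining case is $n=2$: here $q\ge 2$ forces $p=0$, so $N(0,2)=\binom{2}{-1}+\binom{2}{0}+\binom{2}{1}=3\le\tfrac78\cdot 4$. This boundary case is exactly where $q\ge 2$ is used — without it the central window of row $2$ would give ratio $1$ — and it is also the source of tightness, the constant $\tfrac78$ being forced by the rows $n=3,4$ (the configurations $(p,q)=(1,2)$ and $(2,2)$). The main obstacle is thus not any single hard step but locating the right reformulation: once the complementation identity collapses $N(p,q)$ to three consecutive binomials, the estimate is a short monotonicity argument.
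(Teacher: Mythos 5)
Your proof is correct and follows essentially the same route as the paper: both reduce the count, via the complementation bijection on $S_1\setminus S_2$, to the sum of three consecutive binomial coefficients $\binom{n}{p-1}+\binom{n}{p}+\binom{n}{p+1}$ in row $n=|S_1\setminus S_2|+|S_2\setminus S_1|$ and then bound that sum by $\frac{7}{8}\cdot 2^{n}$. The only (minor) divergence is in proving this last inequality, where the paper uses a one-line induction on $n$ via Pascal's rule on the three-term sum, while you locate the maximal window by unimodality and show $g(n)/2^n$ is non-increasing; both work.
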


\begin{proof}
    We start by claiming that
    \begin{equation}\label{binom78}
        \forall n > 2,\,j\in \mathbb{Z}:\: \binom{n}{j-1}+\binom{n}{j}+\binom{n}{j+1} \leq \frac{7}{8}\cdot 2^{n}.
     \end{equation}
    This can be checked by induction on $n$: \eqref{binom78} holds for $n=3$, and assuming it holds for $n-1$, we have
    \[
        \tbinom{n}{j-1}+\tbinom{n}{j}+\tbinom{n}{j+1} = \Bigl(\tbinom{n-1}{j-2}+\tbinom{n-1}{j-1}+\tbinom{n-1}{j}\Bigr) + \Bigl(\tbinom{n-1}{j-1}+\tbinom{n-1}{j}+\tbinom{n-1}{j+1}\Bigr) \leq \tfrac{7}{8}\cdot 2^{n-1} + \tfrac{7}{8}\cdot 2^{n-1} = \tfrac{7}{8}\cdot 2^n.
    \]
    Denote $P = S_2 \setminus S_1$ and $Q = S_1 \setminus S_2$, with $q=|Q|$ and $p=|P|\geq 2$. Let us also denote 
    \[
        \mathcal{D}_j = \{T\subseteq P\cup Q: |T\cap P|-|T\cap Q|=j\}\text{ for an integer }j.
    \] 
    Clearly, $|S\cap S_2| - |S\cap S_1| = |S\cap P| - |S\cap Q|$, and it is sufficient to show that the family $\mathcal{D}_{-1}\cup \mathcal{D}_{0}\cup \mathcal{D}_{1}$ contains at most $\frac{7}{8}\cdot 2^{p+q}$ sets. This is obvious if $p=2$ and $q=0$, so we will further assume $p+q>2$. To any set $T\in \mathcal{D}_j$ we may assign the set $(T\cap P)\cup(Q\setminus T)$ of size $q+j$. Such assignment constitutes a bijection between $\mathcal{D}_j$ and $(q+j)$-subsets of $P\cup Q$. Thus, $|\mathcal{D}_j|=\binom{p+q}{q+j}$, and with \eqref{binom78} we conclude
    \[
        |\mathcal{D}_{-1}\cup \mathcal{D}_{0}\cup \mathcal{D}_{1}| = \binom{p+q}{q-1}+\binom{p+q}{q}+\binom{p+q}{q+1}\leq \frac{7}{8}\cdot 2^{p+q}.
    \]
\end{proof}

\begin{restatable}[]{lemma}{smallDifferenceLemma}\label{smallDifferenceLemma}
    Let $\sss$ be a family of subsets of $[d-1]$ such that $|\sss| = d$ and 
    \[
        \forall S_1, S_2 \in \sss:\:|S_2\setminus S_1|\leq 1.
    \]
    Then either $\sss = \{S\subseteq[d-1]: |S| \geq d-2\}$ or $\sss = \{S\subseteq[d-1]: |S| \leq 1\}$.
\end{restatable}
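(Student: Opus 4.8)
The plan is to reduce to a clean structural setting and then classify. I would begin with two symmetry observations. Since the hypothesis $|S_2\setminus S_1|\le 1$ is required for every ordered pair, applying it with the roles of $S_1,S_2$ swapped gives $|S_1\setminus S_2|\le 1$ as well, so $|S_1\triangle S_2|\le 2$ and the sizes of any two members differ by at most $1$; hence all members of $\sss$ have sizes in $\{m,m+1\}$ for some $m\ge 0$. Moreover, the complementation map $S\mapsto [d-1]\setminus S$ preserves the hypothesis (it turns $|S_2\setminus S_1|$ into $|S_1\setminus S_2|$) and interchanges the two target families, so I may use it to break symmetry at the end. Finally, a one-line computation gives the crucial cross-size relation: if $|S|=m$ and $|T|=m+1$ with $S,T\in\sss$, then from $|T\setminus S|-|S\setminus T|=1$ together with $|T\setminus S|\le 1$ we get $S\subseteq T$, i.e.\ every small set lies inside every large set.

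Next I would prove an equal-size classification: a family $\mathscr F$ of distinct $m$-sets with all pairwise symmetric differences equal to $2$ (equivalently, all pairwise intersections of size $m-1$) and $|\mathscr F|\ge 3$ is either a \emph{star} (all members share a common $(m-1)$-set $C$, so they are $C$ plus distinct singletons) or a \emph{co-star} (all members lie in a common $(m+1)$-set $D$). I would fix two members $A=C\cup\{a\}$, $B=C\cup\{b\}$ with $C=A\cap B$, and analyse a third set $A_3$: if $a,b$ both lie in $A_3$ one gets the co-star seed $D=C\cup\{a,b\}$; if neither lies in $A_3$ one gets $C\subseteq A_3$, the star seed; and the mixed case is impossible. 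An extension argument then shows that once three sets form a star (resp.\ co-star), every further member must extend the same core $C$ (resp.\ remain inside the same $D$), as any deviation would produce a pair at symmetric difference $\ge 3$ with the already-placed third set. Both configurations are small inside $[d-1]$: a star has at most $(d-1)-(m-1)=d-m\le d-1$ members and a co-star at most $|D|=m+1\le d-1$ (the boundary sizes $m=0$ or $m=d-1$ give a single set), and families of at most two sets are trivially this small. Consequently no single size class can contain all $d$ sets, so both sizes $m$ and $m+1$ occur in $\sss$.

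Finally I would finish the two-size case. Write $s=|\sss_m|$, $t=|\sss_{m+1}|$ with $s+t=d$ and $s,t\ge 1$. If $t\ge 2$, pick distinct $T_1,T_2\in\sss_{m+1}$ and any $S\in\sss_m$: by cross-containment $S\subseteq T_1\cap T_2$, and $|S|=m=|T_1\cap T_2|$ force $S=T_1\cap T_2$. Thus the small set is unique ($s=1$) and all pairwise intersections of large sets coincide, so $\sss_{m+1}$ is a star with core $S_0$. Now $1+t=d$ gives $t=d-1$, while a star can have at most $(d-1)-m$ members; this forces $m=0$, $S_0=\varnothing$, and $\sss=\{\varnothing\}\cup\{\text{all singletons}\}=\{S\subseteq[d-1]:|S|\le 1\}$, the first target. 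By the complementation symmetry the case $s\ge 2$ yields the second target, and the remaining case $s=t=1$ forces $d=2$, where both targets coincide with $\{\varnothing,\{1\}\}$.

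The main obstacle is the equal-size classification: the three-set dichotomy, and especially the extension step forbidding a family from mixing a star with a co-star, require a careful (if elementary) case analysis, and it is exactly what upgrades the weak crude count $1+m(d-1-m)$ of an equidistant family to the sharp bound $d-1$ that I need to exclude single-size families. Once that lemma is in place, the symmetry reductions and the intersection/union trick make the remainder routine.
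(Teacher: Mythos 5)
Your proof is correct. At its core it rests on the same combinatorial fact as the paper's proof: given two equal-size members $U=C\cup\{a\}$, $V=C\cup\{b\}$, any third equal-size member either contains $C$ (star) or is contained in $C\cup\{a,b\}$ (co-star), and these two modes of extension are mutually exclusive beyond $U,V$ because mixing them creates a pair with difference of size $2$. The paper exploits this directly: it fixes two equal-size sets $U,V$ (which exist by pigeonhole since $|\sss|=d\ge 3$ and only two sizes occur), observes that every member of $\sss$ is one of $U\cup V$, $U\cap V$, a $k$-set inside $U\cup V$, or a $k$-set containing $U\cap V$, rules out coexistence of the incompatible types, and counts $k+1$ versus $d-k$ to force $k\in\{1,d-2\}$. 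You instead package the dichotomy as a standalone classification of equidistant uniform families into stars and co-stars, use it only to show that a single size class has at most $d-1$ members, and then run a separate two-size analysis via the cross-containment $S\subseteq T$ and the complementation symmetry $S\mapsto[d-1]\setminus S$. Your route is longer and the star/co-star extension step needs the small case check you only sketch (a deviating fourth set is an $m$-subset of $C\cup\{a,b\}$ missing an element of $C$, which then differs from the third star member in two elements), but it cleanly isolates a reusable classification lemma and makes the $\le 1$/$\ge d-2$ duality transparent via complementation, whereas the paper's enumeration is shorter but handles both target families inside one count.
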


\begin{proof}
    The  statement is trivial for $d=2$, so in what follows we assume $d>2$. Then $|\sss|>2$ and clearly $\sss$ contains sets of at most two different sizes (that differ by one). Let $U, V \in \sss$ both be of size $k\in [d-2]$. Observe that there are now only four options for sets in $\sss$:
    \begin{enumerate}[(a)]
        \item $U \cup V$ of size $k+1$. \label{cup}
        \item Sets of size $k$ that are contained in $U \cup V$. \label{cupSubset}
        \item Sets of size $k$ that contain $U\cap V$ as a subset. \label{capSupset}
        \item $U \cap V$ of size $k-1$. \label{cap}
    \end{enumerate}
Since $|(U \cup V)\setminus (U \cap V)|=2$, the sets from \ref{cup} and \ref{cap} cannot occur simultaneously. Similarly, if sets $B, C$ satisfy \ref{cupSubset}, \ref{capSupset}, respectively, and both differ from $U$ and $V$, then $|B\setminus C|=2$. Thus, except for $U$ and $V$, the sets from \ref{cupSubset} and \ref{capSupset} cannot be present together. There are $k+1$ and $d-k$ sets satisfying \ref{cupSubset} and \ref{capSupset}, respectively, so $|\sss|=d$ is only possible if $k=d-2$ or $k=1$ with $\sss = \{S\subseteq[d-1]: |S| \geq d-2\}$ or $\sss = \{S\subseteq[d-1]: |S| \leq 1\}$, respectively. 
\end{proof}

\begin{restatable}[]{lemma}{crossPolyRepresentation}\label{crossPolyRepr}
    Let ${a_1, \ldots, a_{d-1}, v}$ be a basis of $\R^d$. Define
    \[
        s=v+\sum_{i=1}^{d-1}a_i\text{ and }P=\operatorname{Conv}\Bigl(\{\zero, a_1, \ldots, a_{d-1}\}\cup\{s, s-a_1, \ldots, s-a_{d-1}\}\Bigr).
    \]
    Then $P$ is affinely isomorphic to the cross-polytope.
\end{restatable}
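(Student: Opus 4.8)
The plan is to exhibit $P$ as the image of the standard cross-polytope under an invertible affine map. Recall that the standard cross-polytope is $C = \operatorname{Conv}\left(\{\pm e_1, \ldots, \pm e_d\}\right)$, and that for any point $c \in \R^d$ and any linearly independent $w_1, \ldots, w_d$, the affine map $x \mapsto c + Lx$, where $L$ is the linear map determined by $Le_i = w_i$, is an affine isomorphism carrying $C$ onto $\operatorname{Conv}\left(\{c \pm w_i : i \in [d]\}\right)$ (since $\phi(\pm e_i) = c \pm w_i$ and affine isomorphisms commute with $\operatorname{Conv}$). Hence it suffices to rewrite the vertex set of $P$ in the form $\{c \pm w_i\}$ for a suitable center $c$ and linearly independent half-diagonals $w_i$.

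First I would locate the center. Averaging the $2d$ listed points gives $c = s/2$, and one checks directly that they come in $d$ antipodal pairs about $c$, namely $\{\zero, s\}$ and $\{a_i, s - a_i\}$ for $i \in [d-1]$, since each such pair sums to $s = 2c$. Setting $w_0 = s/2$ and $w_i = a_i - s/2$ for $i \in [d-1]$, the vertex set of $P$ becomes exactly $\{c \pm w_0, c \pm w_1, \ldots, c \pm w_{d-1}\}$, matching the target form.

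The one substantive step is to verify that $w_0, w_1, \ldots, w_{d-1}$ are linearly independent. Substituting $s = v + \sum_{j} a_j$ into a hypothetical vanishing combination $\lambda_0 w_0 + \sum_{i} \lambda_i w_i = \zero$, I would collect coefficients in the basis $\{v, a_1, \ldots, a_{d-1}\}$: the coefficient of $v$ forces $\lambda_0 = \sum_i \lambda_i$, after which the coefficients of the $a_j$ force each $\lambda_j = 0$ and hence $\lambda_0 = 0$. This is the heart of the argument, though it is only a short linear-algebra computation; the genuine content is simply the observation that $c = s/2$ together with the antipodal pairing.

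Once linear independence is in hand, the affine map above carries $C$ onto $P$, so $P$ is affinely isomorphic to the cross-polytope, as claimed. Note that all $2d$ points are then automatically vertices of $P$, being the images of the vertices of $C$ under an affine isomorphism, so no separate extremality check is required.
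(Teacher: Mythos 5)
Your proof is correct and is essentially the paper's argument run in reverse: the paper takes the linear map sending the basis $\{a_1,\ldots,a_{d-1},s\}$ to $\{e_d+e_1,\ldots,e_d+e_{d-1},2e_d\}$ followed by a translation by $-e_d$, and the inverse of that affine map is exactly your map $x\mapsto s/2+Lx$ with center $s/2$ and half-diagonals $w_0=s/2$, $w_i=a_i-s/2$. The only substantive difference is that you verify the linear independence of the $w_i$ by a direct coordinate computation, whereas the paper gets invertibility for free from the observation that $\{a_1,\ldots,a_{d-1},s\}$ is itself a basis.
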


\begin{proof}
    Let $\{e_i\}$ be the standard basis of $\R^d$ and consider the linear transform that takes $\{a_1, \ldots, a_{d-1}, s\}$ to $\{e_d + e_1, \ldots, e_d + e_{d-1}, 2e_d\}$. This transform and a translation by $-e_d$ maps $P$ to the standard cross-polytope 
    \[
        K = \operatorname{Conv}\Bigl(\{e_1, \ldots, e_d\}\cup\{-e_1, \ldots, -e_d\}\Bigr).
    \]
\end{proof}

\section{Proofs of the main results}
\subsection{Uniqueness for Theorem~\ref{d_plus_one_two_d}}\label{sec31}

We start by proving a result that characterizes configurations that attain equality  in Theorem~\ref{d_plus_one_two_d}. This can be considered a warm-up proof, which is then used as a carcass for the proof of Theorem~\ref{d2d_plus_2d}. We heavily rely on the notation and claims introduced in the previous section.

\begin{theorem}\label{uniq}
    Let $\aaa,\bb \subseteq \R^d$ both linearly span $\R^d$ such that $\la a, b\ra \in \{0,1\}$ holds for all $a \in \aaa$, $b \in \bb$. Then we only have $|\aaa| \cdot |\bb| = (d+1) 2^d$ if one of the families has size $d+1$ and the other is affinely isomorphic to $\{0,1\}^d$.
\end{theorem}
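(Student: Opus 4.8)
\textbf{Proof plan for Theorem~\ref{uniq}.}

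The plan is to run the machinery of Section~\ref{secstability2} and track exactly where equality must hold in each inequality along the chain, then reconstruct the extremal configuration from those equality conditions. Assume $|\aaa|\cdot|\bb| = (d+1)2^d$ and recall the two regimes used in the proof of Theorem~\ref{d_plus_one_two_d}: the case $\operatorname{dim}U_0 \le d-2$ governed by \eqref{final-old-step-smalldim}, and the case $\operatorname{dim}U_0 = d-1$ governed by \eqref{final-old-step-d-1}. I would first argue that the small-dimension case forces $\operatorname{dim}U_0 = d-1$ anyway, or else leads to a rigid configuration; concretely, in \eqref{final-old-step-smalldim} equality requires $\operatorname{dim}U_0 = d-1$ (the coefficient $(\operatorname{dim}U_0+1)$ must be maximal) together with $|\aaa_0||\bb_0| = 2^d$ and $|\aaa_1||\bb_1| = 2^d$. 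But with $\zero, b_d \in \bb_1$, the sharpened bound \eqref{stronger5} gives $|\aaa_0|(|\bb_0|+2) \le 2^d$, so $|\aaa_0||\bb_0| = 2^d$ is impossible unless $\aaa_0 = \varnothing$, which cannot happen. Hence $\operatorname{dim}U_0 = d-1$, we are in regime \eqref{final-old-step-d-1}, and Inequality~\ref{ineqForCl5} forces $\bb_0 = \varnothing$.

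Once $\operatorname{dim}U_0 = d-1$ and $\bb_0 = \varnothing$, equality in \eqref{final-old-step-d-1} requires two things simultaneously: first, that the application of Theorem~\ref{d_plus_one_two_d} to the projected families (the term $(\operatorname{dim}U_0+1)2^d$, which here is $d\,2^d$) is tight, i.e.\ $2|\aaa_0||\pi(\bb)| = d\,2^d$; and second, that $|\aaa_1||\bb_1| = 2^d$. The second equality together with Inequality~\ref{ineqForCl5} pins down the dimensions: writing $\operatorname{dim}\aaa_1 = k$ and $\operatorname{dim}\operatorname{span}(\bb_1) = d-k$, we need $|\aaa_1| = 2^k$ and $|\bb_1| = 2^{d-k}$, so both $\aaa_1$ and $\bb_1$ fill out full combinatorial cubes in complementary subspaces. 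I would then use the observation from the Preliminaries that $\bb$ lies in a cube dual to any basis drawn from $\aaa$, to convert these cube-filling statements into the claim that one of the two families is a full affine copy of $\{0,1\}^d$ and the other has size exactly $d+1$; the size count follows since $|\aaa|\cdot|\bb| = (d+1)2^d$ forces $\{|\aaa|,|\bb|\} = \{d+1, 2^d\}$ once one family is shown to have $2^d$ elements.

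The main obstacle, I expect, is the bookkeeping that glues the projected extremal data back into a global affine structure: equality in the projected instance of Theorem~\ref{d_plus_one_two_d} only describes $\pi(\bb)$ and $\aaa_0$ inside $U = b_d^\bot$, and I must lift this through $\pi$ using Claim~\ref{cl2} (at most two preimages) and the definition of $\bb_*$ to recover the full family $\bb$ in $\R^d$. The delicate point is ensuring that the two preimages of each point of $\pi(\bb)$ assemble, together with the cube structure of $\bb_1$, into a single affine cube rather than some twisted configuration; here I would exploit that $\bb$ has $0/1$ coordinates in the dual basis of a basis contained in $\aaa$, which rigidly forces the cube. A secondary subtlety is handling the symmetry between $\aaa$ and $\bb$: the conclusion is stated as ``one of the families'' is the cube, so I must check that the roles of $\aaa_0$ and $\aaa_1$ (and correspondingly $\bb$ being the large or the small family) are correctly accounted for, which amounts to verifying that the argument does not secretly assume $|\aaa| \ge |\bb|$.
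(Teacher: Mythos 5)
Your overall strategy---chasing equality through the chain of inequalities from Section~\ref{secstability2}---is the same as the paper's, and your treatment of the regime $\operatorname{dim}U_0\le d-2$ (equality there would force $\left|\aaa_0\right|\left|\bb_0\right|=2^d$, which \eqref{stronger5} rules out since $\zero\in\aaa_0$) is sound, modulo the slip where you write $\operatorname{dim}U_0=d-1$ for the maximal coefficient inside that regime. The genuine gap is in the main case $\operatorname{dim}U_0=d-1$. You correctly isolate the tightness conditions $\left|\aaa_0\right|\cdot\left|\pi(\bb)\right|=d\,2^{d-1}$ and $\left|\aaa_1\right|\left|\bb_1\right|=2^d$, but you then try to recover the global structure from the second condition alone (the ``complementary cubes'' $\aaa_1$ and $\bb_1$) plus the dual-basis observation. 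That cannot work: these facts say nothing about $\aaa_0$ or $\bb_*$, so they do not force $|\aaa|\in\{d+1,2^d\}$. Concretely, in $\R^5$ the sizes $|\aaa|=16$, $|\bb|=12$ satisfy $|\aaa||\bb|=6\cdot 2^5$ with both sizes at most $2^5$ and are fully consistent with $\aaa_1$ being a $3$-dimensional cube and $\bb_1$ a $2$-dimensional one; nothing in your plan excludes this. The missing idea is that the whole argument is an induction on $d$ and that the first tightness condition is an instance of the very statement being proved, one dimension down: applying the induction hypothesis to the extremal pair $(\aaa_0,\pi(\bb))$ inside $U$ forces either $\aaa_0$ to be affinely isomorphic to $\{0,1\}^{d-1}$ (whence $|\aaa|=|\aaa_0|+|\aaa_1|=2^d$, using also the tightness of Inequality~\ref{in0}, i.e.\ $\left(\left|\aaa_0\right|-\left|\aaa_1\right|\right)\left|\bb_*\right|=0$, a condition your list omits and which drives the paper's split into the subcases $\bb_*\ne\varnothing$ and $\bb_*=\varnothing$), or $\left|\aaa_0\right|=d$, a degenerate branch that must be eliminated by separate size estimates such as $4d^2<(d+1)2^d$ for $d\ge 4$ together with ad hoc arguments for $d=2,3$. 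Your proposal contains neither the inductive invocation nor this second branch.

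A secondary remark: the difficulty you anticipate about gluing the two preimages of each point of $\pi(\bb)$ into an untwisted cube is not where the work lies. In the paper the cube is identified on the $\aaa$ side: once $|\aaa|=2^d$ is established, the observation that $\aaa$ sits inside $\{0,1\}^d$ when written in the basis dual to a basis drawn from $\bb$ immediately makes $\aaa$ the full combinatorial cube, and $\bb$ is then forced to consist of a basis together with $\zero$; no lifting of $\bb$ through $\pi$ is required. Your concern about the hidden assumption $|\aaa|\ge|\bb|$ is also moot, since the statement is symmetric in the two families and the normalization is a harmless relabeling made explicitly at the outset.
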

\begin{proof}
     Without loss of generality, we assume $\left|\aaa\right| \geq \left|\bb\right|$. We  use induction on $d$, the statement being obvious in dimension $1$. Assuming that the statement holds for smaller dimensions, we prove it in dimension $d$. Consider two options for $\operatorname{dim}U_0$.
    \begin{enumerate}
        \item $\operatorname{dim}U_0 \leq d-2$. From Inequality \ref{in1} and \eqref{stronger5}, we get:
        \begin{equation*}
            \left|\aaa\right| \cdot\left|\bb\right| \leq 
            \left(d - 1\right) 2^d + 2\cdot2^d - 2\left|\aaa_0\right| \leq
            \left(d + 1\right) 2^d - \left|\aaa\right| < \left(d + 1\right) 2^d.
        \end{equation*}
        \item $\operatorname{dim}U_0 = d-1$. Note that since $\zero \in \aaa_0$, the definition of $\bb_0$ implies $\bb_0 \subset U_0^\bot$, and thus  we have $\bb_0 = \varnothing$ (recall that $\zero, b_d \in \bb_1$). We consider two subcases:
        \begin{enumerate}
            \item[a)]\label{uniqfirstcase} $\bb_*\neq\varnothing$. As we see from \eqref{final-old-step-d-1}, equality in Theorem~\ref{d_plus_one_two_d} can only be achieved when 
            Inequality \ref{in1} (and consequently Inequality \ref{in0}) are tight, which is only the case when  $\left|\aaa_0\right|\left|\pi(\bb)\right|=d 2^{d-1}$ (and $\left|\aaa_0\right|=\left|\aaa_1\right|$). By the induction hypothesis, the former is possible in one of two cases:
            \begin{enumerate}
                \item[i)] $\aaa_0$ is affinely isomorphic to $\{0,1\}^{d-1}$. Then, $\left|\aaa\right|=\left|\aaa_0\right|+\left|\aaa_1\right|=2^d$, which is only possible if $\aaa$ is affinely isomorphic to $\{0,1\}^d$, and then $\bb$ can only consist of a basis and the zero vector.
                \item[ii)] $\left|\aaa_0\right|=d$. Then, since $\left|\bb\right|\leq\left|\aaa\right|=2d$,  $\left|\aaa\right|\cdot\left|\bb\right|\leq4d^2$, which is less than $(d+1)2^d$ for $d\geq4$. For $d=3$, the inequality $\left|\bb\right|\cdot\left|\aaa\right|\leq32$ cannot yield equality since $\left|\aaa\right|=6$. Finally, if $d=2$ then $|\aaa|=4$, thus $\aaa$ is affinely isomorphic to a square  and $|\aaa|\cdot|\bb| = 3 \cdot 2^2$ only if $|\bb|=3=d+1$.
            \end{enumerate}
            \item [b)] $\bb_*=\varnothing$. Then, $\bb_1 = \bb$ and, consequently, $\operatorname{dim}(\operatorname{span}(\bb_1))=d$. In this case Inequality \ref{ineqForCl5} implies $|\aaa_1|=1$.
            Similarly to case a), Inequality \ref{in0} is only tight in one of the following cases:
            \begin{enumerate}
                \item[i)]$\left|\aaa_0\right|=d$. Then, $|\aaa|\cdot|\bb| \leq |\aaa|^2 \leq \left(d+1\right)^2<\left(d+1\right)2^d$.
                \item[ii)]$\left|\aaa_0\right|=2^{d-1}\text{, }\left|\pi(\bb)\right|=d$. Then, $\left|\aaa\right|\cdot\left|\bb\right|=2d\left(2^{d-1}+1\right)$, which is less than $(d+1)2^d$ for $d>2$. For $d=2$, we have $\left|\aaa\right|\cdot\left|\bb\right|\leq\left|\aaa\right|^2=9<3\cdot2^2$.
            \end{enumerate}
        \end{enumerate}
    \end{enumerate}
\end{proof}

\subsection{Proof of Theorem~\ref{d2d_plus_2d}}\label{sec32}

For convenience, let us restate the theorem.
\mainth*
    As in the proof of Theorem \ref{uniq}, we  use induction on $d$, and without loss of generality assume that $\left|\aaa\right|\geq\left|\bb\right|$. Note that we can also assume that $\aaa$ and $\bb$ are inclusion-wise maximal with respect to the property of having binary scalar products. For $d<3$, the bounds in Theorems~\ref{d2d_plus_2d} and~\ref{d_plus_one_two_d} coincide. Assuming validity for smaller dimensions, let us prove the statement for dimension $d$. We consider cases depending on the  value of $\operatorname{dim}U_0$.

    \begin{enumerate}
        \item\label{caseVerySmallU0} $\operatorname{dim}U_0 < d-2$. Then, from Inequality \ref{in1} and Claim \ref{cl5}, we have:
        \begin{equation}\label{strongWhenLessd-2}
            \left|\aaa\right|\cdot\left|\bb\right| \leq \left(\operatorname{dim} U_0 + 1\right) 2^d + 2^d + 2^d\leq d 2^d.
        \end{equation}
        \item\label{caseSmallU0} $\operatorname{dim}U_0 = d-2$. Applying the induction hypothesis to the families $\tau(\pi(\bb))$ and $\aaa_0$, we have three cases:
        \begin{enumerate}
            \item[a)] $|\tau(\pi(\bb))|=d-1$. By maximality $\bb$ contained $\zero$, so $\tau(\pi(\bb))$ consists of zero and the basis of $U_0$. The maximality of $\aaa$ now implies that $\aaa_0$ is affinely isomorphic to $\{0,1\}^{d-2}$. From \eqref{stronger5}, it follows that $\left|\bb_0\right|\leq2$. 
            
            For a given $b\in\bb_0$, there are two vectors that project onto $\pi(b)$ under $\pi$. Since they have identical scalar products with all the vectors in $\aaa_0$, and in our considerations below we work with $\aaa_0$ only, we can assume $|\bb_0|$ is even: if one vector belong to $\bb_0,$ then we can w.l.o.g. assume that the second one belongs to $\bb_0$ as well. We thus have two scenarios:
            \begin{enumerate}
                \item[i)] $\left|\bb_0\right|=0$. Then, from Inequality \ref{in0} and Claim \ref{cl5}, we obtain:
                \begin{equation*}
                    \left|\aaa\right|\cdot\left|\bb\right|\leq4\left(d-1\right)2^{d-2}+2^d=d 2^d.
                \end{equation*}
                \item[ii)] $\left|\bb_0\right|=2$. Then $U_0^{\bot} \cap \bb$ consists of $\zero, b_d$ and two vectors from $\bb_0$. Let $\bb'$ be a subset of $\pi(\bb)$ containing all vectors $v$ such that $\tau(v)$ has two preimages in $\pi(\bb)$. Assume that $|\tau(\bb')|=k+1$ (and thus $|\bb'| = 2k+2$).  
                Among these $k+1$ vectors, let $t_2$ be the number of those vectors with both preimages in $\pi(\bb_1)$, and let $t_1+1$ be the number of those with exactly one preimage in $\pi(\bb_1)$ ($+1$ standning for the zero vector which belongs to  $\bb_1$). The remaining $k-t_1-t_2$ have both preimages in $\pi(\bb_*)$. Furthermore, let the vectors in $\tau(\pi(\bb))$ with a single preimage under $\tau$ consist of $q$ projections from $\pi(\bb_1)$ and $d-2-k-q$ projections from $\pi(\bb_*)$. Recall that by definition of $\bb_*$, $|\bb_*| = |\pi(\bb_*)|$, which consists of  $t_1 + 2(k-t_1-t_2)$ vectors in $\bb'$ (out of $t_1 + 1$ vectors mentioned above, all but $\zero\in\R^d$ have the second preimage in $\pi(\bb_*)$) and $d-2-k-q$ vectors in $\pi(\bb)\setminus \bb'$. Therefore, 
                \[
                    |\bb_*| = t_1 + 2(k-t_1-t_2) + (d-2-k-q) = k - t_1 - 2t_2 + d - 2 - q.
                \]
                Next, from definition and Claim~\ref{cl2}, $|\bb_1|=2|\pi(\bb_1)|$. Besides $\zero$, $\pi(\bb_1)$ consists of $2t_2+t_1$ vectors from $\bb'$ and $q$ vectors from $\pi(\bb)\setminus \bb'$. This means that
                \[
                    |\bb_1| = 2|\pi(\bb_1)| = 2(t_1 + 2t_2 + q + 1) = 2 + 4t_2 + 2t_1 + 2q.
                \]
                Adding this up, we have:

                \begin{equation*}
                    \begin{split}
                        \left|\bb\right|
                        & = \left|\bb_*\right|+\left|\bb_0\right|+\left|\bb_1\right| \\
                        & = (k-t_1-2t_2+d-2-q)+2+(2+4t_2+2t_1+2q) \\ 
                        & = d + k + q + t_1 + 2t_2 + 2.
                    \end{split}
                \end{equation*}
                First, consider the case when $t_2 > 0$. Then $\pi(\bb_1)$ contains two elements that differ by a vector orthogonal to $U_0$, thus $U_0^\bot\subset\operatorname{span}(\bb_1)$. Recall that $\tau(\pi(\bb))$ consists of zero and the basis of $U_0$, together with the previous observation this implies $\operatorname{dim}(\operatorname{span}(\bb_1)) = |\tau(\pi(\bb_1))|+1$. The family $\tau(\pi(\bb_1))$ consists of the zero vector,  $t_1 + t_2$ elements from $\tau(\bb')$ and $q$ elements from $\tau(\pi(\bb) \setminus \bb')$. We get that $\operatorname{dim}(\operatorname{span}(\bb_1)) = t_1+t_2+q+2$, and, by \eqref{ineqForCl5}, $|\aaa_1|\leq2^{d-t_1-t_2-q-2}$. Consequently,  $$|\aaa|= |\aaa_0| + |\aaa_1| \leq 2^{d-2} + 2^{d-2-t_1-t_2-q},$$ and we get the following chain of inequalities.
                 
                \begin{align}
                    \left|\aaa\right|\cdot\left|\bb\right| 
                    & \leq \left(2^{d-2} + 2^{d-2-t_1-t_2-q}\right)\left(d+k+q+t_1+2t_2+2\right) \nonumber \\ 
                    & \leq \left(2^{d-2} + 2^{d-2-t_1-t_2-q}\right)\left(2d+t_1+2t_2\right) \label{mid1} \\
                    & \leq \left(2^{d-1} + 2^{d-1-t_1-t_2-q}\right)\left(d+t_1+t_2\right) \nonumber \\
                    & \leq \left(2^{d-1} + 2^{d-1-t_1-t_2}\right)\left(d+t_1+t_2\right) \label{mid2} \\
                    & \leq \left(2^{d-1} + 2^{d-1-t_1-t_2}\right)\left(d+t_1+t_2+1\right) \nonumber \\
                    & \leq d2^d + 2d. \label{mid3}             
                \end{align}
          Here, the second inequality follows from $k+q\leq d-2$, and the last one follows from Inequality \ref{in2}. If $t_2=0$, we get a slightly weaker bound:
                \begin{equation*}
                    \operatorname{dim}(\operatorname{span}(\bb_1)) \geq t_1+t_2+q+1 =t_1+q+1. \end{equation*}
                With the same reasoning this means that \eqref{mid2} becomes $\left(2^{d-1} + 2^{d-t_1}\right)\left(d+t_1\right)$, which is still less than \eqref{mid3} when $t_1\geq2$ due to Inequality \ref{in2}. Finally, when $t_2=0$ and $t_1=0, 1$, expression \eqref{mid1} yields a bound by $d2^d$ and $(2^{d-2}+2^{d-3})(2d+1)=d 2^d - \left(d-\frac{3}{2}\right)2^{d-2}\leq d2^d$, respectively.
            \end{enumerate}
            \item[b)] $|\aaa_0|=d-1$. Then:
            \begin{equation*}
                |\aaa|\cdot|\bb|\leq|\aaa|^2\leq4|\aaa_0|^2\leq4(d-1)^2\leq d2^d+2d, 
            \end{equation*}
            valid for any $d\ge 1.$
            \item[c)] Both $|\aaa_0|$ and $|\tau(\pi(\bb))|$ are at least $d$. By induction this implies 
            \[
            \left|\aaa_0\right|\cdot\left|\tau(\pi(\bb))\right|\leq\left(d-2\right)\left(2^{d-2}+2\right).
            \]
            Using Inequality \ref{in0}, Claim \ref{cl3}, and \eqref{stronger5}, we have 
            \begin{equation*}
                \left|\aaa\right|\cdot\left|\bb\right| \leq 4\cdot\left(d-2\right)\left(2^{d-2}+2\right)+2 \cdot 2^d - 2\left|\aaa_0\right| = 2d(2^{d-1}+1)+2\left(3d-8-\left|\aaa_0\right|\right).
            \end{equation*}
            This proves \eqref{eqmainvec} when $\left|\aaa_0\right|\geq3d-8$. Otherwise,
            \begin{equation*}
                \left|\aaa\right|\cdot\left|\bb\right| \leq \left|\aaa\right|^2 \leq 4\left|\aaa_0\right|^2 \leq 4\left(3d-9\right)^2,
            \end{equation*}
            which is less than $d2^d+2d$ for $d\geq3$. 
        \end{enumerate}
        \item $\operatorname{dim}U_0 = d-1$. Again, applying the induction hypothesis to $\pi(\bb)$ and $\aaa_0$, we have three cases (recall that from the assumption $\zero,b_d\in\bb_1$, we have $\bb_0=\varnothing$):
        \begin{enumerate}
            \item[a)] $|\pi(\bb)|=d$, that is, $\pi(\bb)$ consists of zero and the basis of $U_0$, which by maximality of $\aaa$ means that $\aaa_0$ is isomorphic to $\{0,1\}^{d-1}$.
            \begin{enumerate}
                \item[i)] $\operatorname{dim}\bb_1=1$. In this case, $\bb_1=\{\zero, b_d\}$ and so $\left|\bb\right|=d+1$. This contradicts the condition $|\bb|\ge d+2$ in the statement of the theorem.
                \item[ii)] $\operatorname{dim}\bb_1=k\geq2$. Then $\left|\bb_1\right|=2k$ and $\left|\aaa_1\right|\leq2^{d-k}$ by Inequality \ref{ineqForCl5}. Thus, we have
                \begin{equation*}
                    \left|\aaa\right|\cdot\left|\bb\right|\leq(2^{d-1}+2^{d-k})(d+k) \leq d2^d+2d.
                \end{equation*}
                by Inequality \ref{in2}.
            \end{enumerate}
            \item[b)] $\left|\aaa_0\right|=d$. Then $|\aaa||\bb|\leq\left|\aaa\right|^2\leq4|\aaa_0|^2\leq4d^2$, which is not larger than $d2^d+2d$ for $d>3$. For $d=3$, $|\aaa|^2$ gives the desired bound when $|\aaa_1|\leq2$, and finally $|\aaa_1|=3$ would by Inequality \ref{ineqForCl5} imply 
            \[
                \operatorname{dim}\aaa_1=2 \Rightarrow |\bb_1|=2 \Rightarrow |\bb|\leq 5 \Rightarrow |\aaa|\cdot|\bb|\leq 3\cdot 2^3 + 2\cdot 3.
            \]
            \item[c)] Both $\left|\aaa_0\right|$ and $\left|\pi(\bb)\right|$ are at least $d+1$.
        \end{enumerate}
    \end{enumerate}
    The remainder of the proof deals with the case 3c). By the induction hypothesis, \begin{equation*}
            \left|\aaa_0\right|\cdot\left|\pi(\bb)\right|\leq\left(d-1\right)\left(2^{d-1}+2\right).
        \end{equation*}
        In the displayed chain below, we use Claim~\ref{cl2} in the first equality; in the fourth equality we use that $\bb_0 = \varnothing$, and thus $\bb\setminus \bb_*=\bb_1$; in the second inequality we use  Claim~\ref{cl5}.
        \begin{align}
            |\aaa|\cdot|\bb|& = (|\aaa_0|+|\aaa_1|)\cdot(2|\pi(\bb)|-|\bb_*|)\nonumber\\
            & = 2|\aaa_0||\pi(\bb)| + 2|\aaa_1||\pi(\bb)|- 2|\aaa_1||\bb_*| + |\aaa_1||\bb_*| - |\aaa_0||\bb_*|  \nonumber\\
            & = 2|\aaa_0||\pi(\bb)|  + |\aaa_1||\bb \setminus \bb_*| + |\aaa_1||\bb_*|- |\aaa_0||\bb_*| \nonumber\\
            & = 2|\aaa_0||\pi(\bb)|+|\aaa_1||\bb_1|-\left(|\aaa_0|-|\aaa_1|\right)|\bb_*| \label{general} \\
            & \leq 2\left(d-1\right)\left(2^{d-1}+2\right)+|\aaa_1||\bb_1|-\left(|\aaa_0|-|\aaa_1|\right)|\bb_*| \label{general_ind} \\
            & \leq 2\left(d-1\right)\left(2^{d-1}+2\right)+2^{d}-\left(|\aaa_0|-|\aaa_1|\right)|\bb_*| \nonumber \\
            & = d 2^{d} + 2d - \left(|\aaa_0|-|\aaa_1|\right)|\bb_*| + (2d-4). \label{2d_minus_4}
        \end{align}
        
        \noindent Thus, it suffices to show, for example, that 
        $\left(\left|\aaa_0\right|-\left|\aaa_1\right|\right)\left|\bb_*\right|\geq 2d-4$. \\
        \noindent First consider the case  $\operatorname{dim}\aaa_1=d-1$. Then $\bb_1=\{\zero, b_d\}$, and we get 
        
        \begin{align*}
|\aaa|\cdot|\bb|=&|\aaa||\pi(\bb)|+|\aaa|\cdot\frac{1}{2}|\bb_1|\leq 2|\aaa_0||\pi(\bb)| + |\aaa| \\ 
\leq& 2(d-1)\left(2^{d-1}+2\right)+ |\aaa| =  d 2^d  - 2^d +4d-4+ \left|\aaa\right|.
        \end{align*}
        Thus, \eqref{eqmainvec} holds when $\left|\aaa\right|\leq 2^d-2d+4 $. Note that $\left|\aaa\right|> 2^d-2d+4$ is indeed impossible, as that would imply $\left|\aaa_0\right|> 2^{d-1}-d+2$ and
        \begin{equation*}
            \left|\aaa_0\right|\cdot\left|\pi(\bb)\right| > (2^{d-1}-d+2)\cdot(d+1)\geq(d-1)(2^{d-1}+2),
        \end{equation*}
        which contradicts the induction hypothesis. 
        
    In what follows, we assume that $\operatorname{dim}\aaa_1<d-1 $. Let us show that, due to this, we can also assume that $|\aaa_0|>|\aaa_1|$. To this end, suppose we had $|\aaa_0|=|\aaa_1|$ and recall how Claim~\ref{cl1} gave us an opportunity to switch $\aaa_0$ and $\aaa_1$ places by translating $\aaa$ and replacing some points in $\bb$ by their opposites. Since $|\aaa_0|$ is not smaller then $|\aaa_1|$, this opportunity was not used, but since $|\aaa_0|=|\aaa_1|$, nothing stops us from employing this transform nevertheless. Since this has an effect of swapping $\aaa_0$ and $\aaa_1$, we reduce to a case where $\operatorname{dim}U_0<d-1$, for which the desired bound has been shown in cases \ref{caseVerySmallU0} and \ref{caseSmallU0}. In what follows, we assume $|\aaa_0| > |\aaa_1|$.

        Consider the orthogonal projection $\pi_{\bb_1}:\mathbb{R}^d\rightarrow\operatorname{span}(\bb_1)$. By the definition of $\aaa_1$, we have $\left|\pi_{\bb_1}(\aaa_1)\right|=1$. Let $k=\operatorname{dim(\operatorname{span}(\bb_1))}$. Since $\bb$ contains a basis of $\mathbb{R}^d$, we have
        \begin{equation}\label{A_dif_B_star}
            \left|\bb_*\right|\geq d-k ,\ 
            \left(\left|\aaa_0\right|-\left|\aaa_1\right|\right)\left|\bb_*\right| \geq d-k.
        \end{equation}
        We will now deal with possible values of $k$.
        \begin{enumerate}
            \item[i)] $k=1$, which means $\bb_1=\{0,\:b_d\}$. Since $\operatorname{dim}\aaa_1<d-1$, from Inequality~\ref{ineqForCl5} it follows that $\left|\aaa_1\right|\leq2^{d-2}$. Substituting this into \eqref{general_ind}, we obtain:
            \begin{equation*}
                |\aaa|\cdot|\bb|\leq d 2^d + 2d + (2d - 4 - 2^{d-1}) \leq d 2^d + 2d.
            \end{equation*}
            \item[ii)] $k=2$. From Inequality~\ref{ineqForCl5}, it follows that $\left|\bb_1\right|\leq4$, and $\left|\aaa_1\right|\leq2^{d-2}$. Due to \eqref{A_dif_B_star}, $\left|\bb_*\right|\geq d-2$, so if $\left|\aaa_0\right|-\left|\aaa_1\right|\geq 2$ then \eqref{2d_minus_4} yields the desired estimate. Similarly, \eqref{2d_minus_4} completes the proof if $\left|\aaa_0\right|-\left|\aaa_1\right|=1$ and $\left|\bb_*\right|\geq 2d-4$. Finally, if $\left|\aaa_0\right|-\left|\aaa_1\right|=1$ and $\left|\bb_*\right|<2d-4$, then:
            \begin{equation*}
                \left|\aaa\right|\cdot\left|\bb\right|=\left(2\left|\aaa_1\right|+1\right)\cdot\left(\left|\bb_*\right|+\left|\bb_1\right|\right)<\left(2^{d-1}+1\right)\cdot\left(2d-4+4\right)=d 2^d + 2d.
            \end{equation*}
            \item[iii)] $k=d$. Inequality \ref{ineqForCl5} implies that $\aaa_1$ consists of only one point. Hence, \eqref{general_ind} becomes
            \begin{equation*}
                \left|\aaa\right|\cdot\left|\bb\right|\leq 2\left(d-1\right)\left(2^{d-1}+2\right)+\left|\bb_1\right|-\left(\left|\aaa_0\right|-\left|\aaa_1\right|\right)\left|\bb_*\right| \leq 2\left(d-1\right)\left(2^{d-1}+2\right)+\left|\bb\right|,
            \end{equation*}
            which completes the proof when $|\bb| \leq 2^d-2d+4$. The opposite is indeed impossible, as it would contradict Theorem~\ref{d_plus_one_two_d}:
            \begin{equation*}
                \left|\aaa\right|\cdot\left|\bb\right|\geq\left|\bb\right|^2\geq\left(2^d-2d+4\right)^2>\left(d+1\right)2^d.
            \end{equation*}
        \end{enumerate}
\vskip+0.2cm
        Before proceeding with the last case in the proof, we need to analyze the structure of  $\aaa$. The family $\aaa$ contains zero, and
        \[
            \operatorname{span}\bigl(\pi_{\bb_1}(\aaa)\bigr)=\operatorname{span}\bigl(\pi_{\bb_1}(\operatorname{span}(\aaa))\bigr)=\operatorname{span}(\bb_1),
        \]
        which means $\pi_{\bb_1}(\aaa)$ contains $\zero$ and a basis of an $k$-dimensional space, so $|\pi_{\bb_1}(\aaa)|\geq k+1$. Assume that $|\pi_{\bb_1}(\aaa)|> k+1$. We have

        \begin{align}
            & \left|\bb_1\right|\cdot\left|\pi_{\bb_1}(\aaa)\right|\leq\left(k+1\right)2^k,\;\left|\pi_{\bb_1}(\aaa)\right|\geq k+2 \Rightarrow \left|\bb_1\right| \leq 2^k\left(1-\frac{1}{k+2}\right) \Rightarrow \nonumber \\
            & \left|\aaa_1\right|\left|\bb_1\right| \leq 2^d\left(1-\frac{1}{k+2}\right) \Rightarrow \left|\aaa\right|\cdot\left|\bb\right|\stackrel{\eqref{general_ind}}{\leq} d 2^d + 4d-4- \frac{2^d}{k+2}-(d-k). \nonumber
        \end{align}
        This proves \eqref{eqmainvec}, because for $d \geq 3$ and $k < d$
        \begin{equation*}
            d+k-4-\frac{2^d}{k+2}\leq2d-5-\frac{2^d}{d+1}=-\frac{1}{d+1}\left(2^d - (2d-5)(d+1)\right)\leq 0.
        \end{equation*}
In what follows, we thus may assume that $|\pi_{\bb_1}(\aaa)|=k+1$ and that $\pi_{\bb_1}(\aaa)$ consists of $\zero$ and of a basis of an $k$-dimensional space.
        
        \begin{enumerate}
            \item[iv)] $2 < k < d$. Note that, due to \eqref{A_dif_B_star}, $\bb_* \neq \varnothing$. Let's denote the elements of $\pi_{\bb_1}(\aaa)$ as $a_0 = 0, a_1, \ldots, a_k$, and their preimages in $\aaa$ as $\mathbb{A}_j=\pi_{\bb_1}^{-1}(a_j)\cap\aaa$. We'll choose the numbering such that $\mathbb{A}_1=\aaa_1$. Let $b_{11}, b_{12}, \ldots, b_{1k}$ be a basis of $\bb_1$ that is dual to $a_1, \ldots, a_k$. For example, according to our choice of numbering, $b_{11}=b_d$. Note that, due to $\bb$ being inclusion-wise maximal, all $b_{1j}$ must belong to $\bb_1$ (otherwise, they, along with $b_{1j}+b_d$ for $j>1$, could be added to $\bb$). If $\operatorname{dim}\aaa_1 < d-k$, we have $|\aaa_1||\bb_1|\leq 2^{d-1}$ and just like in part i), substitution into \eqref{general_ind} produces the desired estimate. Consequently, we can now assume that $\operatorname{dim}\aaa_1=d-k$.

            Our further plan is to write $\aaa$ in a particular basis to see that, due to $\operatorname{dim}\mathcal{A}_1=d-k$, any of the $b_{1j}$ could be initially chosen as $b_d$, and that a suitable choice of $b_d$ would lead to the desired bound.
            
            We will extend the basis $\{b_{11}, \ldots, b_{1k}\}$ with elements from $\bb_*$ to form a basis for $\mathbb{R}^d$ and represent $\aaa$ in the dual basis. Then vectors of $\aaa$, arranged as column-vectors, form a matrix of the following form:
            
            \begin{equation*} 
            \renewcommand{\arraystretch}{0.9}
            \aaa=\begin{pNiceArray}{w{c}{2pt}w{c}{2pt}w{c}{2pt}w{c}{2pt}w{c}{2pt}|w{c}{2pt}w{c}{2pt}w{c}{2pt}w{c}{2pt}w{c}{2pt}|w{c}{2pt}w{c}{2pt}w{c}{2pt}w{c}{2pt}w{c}{2pt}|w{c}{2pt}w{c}{2pt}w{c}{2pt}w{c}{2pt}w{c}{2pt}|w{c}{2pt}w{c}{2pt}w{c}{2pt}w{c}{2pt}w{c}{2pt}}[first-row,last-col]
                \Block{1-5}{\mathbb{A}_0}\phantom{0} & & & & &
                \Block{1-5}{\mathbb{A}_1} & & & & &
                \Block{1-5}{\mathbb{A}_2} & & & & &
                \Block{1-5}{\cdots} & & & & &
                \Block{1-5}{\mathbb{A}_k} & & & & &\\
                
                \Block{5-5}<\LARGE>{\bm{0}} & & & & &
                1 & 1 & \cdots & 1 & 1 & 
                0 & 0 & \cdots & 0 & 0 &
                \Block{10-5}<\Large>{\cdots} & & & & &
                \Block{4-5}<\Large>{\bm{0}} & & & & &\\
                
                  &  &  &  &  &
                \Block{4-5}<\Large>{\bm{0}} & & & & & 
                1 & 1 & \cdots & 1 & 1 &
                  &  &  &  &  & 
                  &  &  &  &  &\\

                  &  &  &  &  &
                  &  &  &  &  &
                \Block{3-5}<\Large>{\bm{0}} &  &  &  &  &
                  &  &  &  &  &
                  &  &  &  &  &  \ \:k\\
                  
                &&&&&&&&&&&&&&&&&&&&&&&&\\
                &&&&&&&&&&&&&&&&&&&& 1 & 1 & \cdots & 1 & 1 &\\ \hdottedline

                \phantom{0}&&&&&\Block[fill=[RGB]{235,235,235},respect-arraystretch]{5-5}{}&&&&&\phantom{0}&&&&&\phantom{0}&&&&&\phantom{0}&&&& &\\
                &\phantom{0}&&&&&\phantom{0}&&&&&\phantom{0}&&&&&\phantom{0}&&&&&\phantom{0}&&& &\\
                &&\phantom{0}&&&&&\phantom{0}&&&&&\phantom{0}&&&&&\phantom{0}&&&&&\phantom{0}&& &\ \:d-k\\
                &&&\phantom{0}&&&&&\phantom{0}&&&&&\phantom{0}&&&&&\phantom{0}&&&&&\phantom{0}& &\\
                &&&&\phantom{0}&&&&&\phantom{0}&&&&&\phantom{0}&&&&&\phantom{0}&&&&&\phantom{0} &\\
                
                \CodeAfter
                    \line[radius=0.7pt]{7-2}{9-4}
                    \line[radius=0.7pt]{7-7}{9-9}
                    \line[radius=0.7pt]{7-12}{9-14}
                    \line[radius=0.7pt]{7-22}{9-24}
                    \SubMatrix{.}{1-1}{5-25}{]}[right-xshift=7pt]
                    \SubMatrix{.}{6-1}{10-25}{]}[right-xshift=7pt]
                    \UnderBrace[left-shorten, yshift=5pt]{6-6}{10-10}{\substack{\operatorname{dim}=d-k
                    }
                    }
                \end{pNiceArray}
            \end{equation*}

            \bigskip\bigskip\bigskip  
            
            The affine dimenion of the highlighted block coincides with the affine dimension $\dim(\mathbb A_1)$ of $\mathbb{A}_1=\aaa_1$, which is $d-k$. There is therefore an affine basis of a $\R^d$ that consists of $d-k+1$ vectors from $\mathcal{A}_1$, one vector from each other $\mathbb{A}_l$, $l>1$, and  $\zero$ that belongs to $\mathbb{A}_0$. 
            We thus have
            \begin{equation*}
                \forall j>1\!:\;\operatorname{dim}(\aaa\cap b_{1j}^\bot) = \operatorname{dim}(\aaa\setminus\mathbb{A}_j) = d-1,
            \end{equation*}  
            which means that, indeed, any of the $b_{1j}$ could be set as $b_d$ from the start. Choose $b_{1j}$ with the smallest possible size of $\mathbb{A}_j$, and repeat all the same reasoning with $b_{1j}$ playing the role of $b_d$. Note that in this case, $\left|\aaa\setminus\mathbb{A}_j\right|>\left|\mathbb{A}_j\right|$, so there will be no need for translation of $\aaa$ that swaps $\aaa_0$ and $\aaa_1$ in Claim~\ref{cl1}. After this reassignment of $b_d$ and appropriate relabeling of families $\mathbb{A}_l$, we may  assume that $|\mathbb{A}_j|$, among positive $j$, is minimised by $j=1$.
            \begin{align}
                \forall j>1\!:\:|\mathbb{A}_1| \leq |\mathbb{A}_j|\; \Longrightarrow & \nonumber\\   |\aaa_0|-|\aaa_1| = \biggl(|\mathbb{A}_0|+\sum_{j>1}|\mathbb{A}_j|\biggr) - |\mathbb{A}_1| > \sum_{j>1}|\mathbb{A}_j| - |\mathbb{A}_1| & \geq(k-2)|\aaa_1| \geq |\aaa_1|.\label{dif_A1_bound}
            \end{align} 
            If $\left|\aaa_0\right|-\left|\aaa_1\right|\geq2d-4$, non-emptiness of $\bb_*$ and \eqref{2d_minus_4} imply the desired estimate. Otherwise
            \begin{align*}
                |\aaa_0|-|\aaa_1| \leq 2d-5 & \;\xRightarrow{\eqref{dif_A1_bound}}\; |\aaa_1| \leq 2d - 6\;\Longrightarrow |\aaa| = (|\aaa_0|-|\aaa_1|) + 2|\aaa_1| \leq 6d-17, \\
                &|\aaa|\cdot|\bb| \leq |\aaa|^2 \leq (6d-17)^2 < d 2^d + 2d,
            \end{align*}
            concluding the proof.
        \end{enumerate}

\subsection{Application to \textbf{2}-level polytopes} \label{sec2level}
For convenience, we restate our main result concerning two-level polytopes.
\twoLevelNew*

\begin{proof}[Proof of Theorem \ref{two_level_new_bound}]

For $d=2,3$ the desired bound coincides with the one in Theorem~\ref{two_level_old_bound}, so we will further assume $d>3$. Let us denote $V=f_0(P)$ and $F=f_{d-1}(P)$ for conciseness. Shift $P$ so that $0$ is a vertex of $P$. Let $\aaa$ denote the vertex set of $P$ and $\bb'$ denote the minimal set of vectors such that every facet of $P$ lies in a hyperplane $\{x\,:\,\langle x,b \rangle = \delta \}$ for some $\delta\in\{0,1\}$ and $b\in\bb'$. Let $\bb = \bb' \cup \{0\}$.
    If every vector in $\bb'$ defines one facet of $P$, we are done by Theorem \ref{d_plus_one_two_d}: 
    \[V\cdot F < \left|\aaa\right|\cdot\left|\bb\right| \leq (d+1)2^d<(d-1)2^{d+1}+8(d-1).\]
    Otherwise, let $b_d\in\bb'$ define two facets of $P$. If the facet $P\cap b_d^{\bot}$ contains less half of the vertices of $P$, shift $P$ again so that zero becomes a vertex from the other (parallel) facet, reintroduce families $\aaa$, $\bb$ as described above and select $b_d\in\bb'$ that now defines the same two facets. 
    Now, $P\cap b_d^{\bot}$ contains at least half of the vertices of $P$. Introduce $\aaa_i$, $\pi$ and $\bb_i$ as in Section~\ref{secStability}. Note that because we've ensured $|\aaa_0|\geq|\aaa_1|$, no transformations are required in Claim~\ref{cl1}, and we have $\la a, b\ra \in \{0,1\}$ for all $a\in\aaa$, $b\in\bb$.
    Since $\operatorname{dim}(\aaa_1)=d-1$, we have $\bb_0 = \emptyset, \bb_1=\{0,b_d\}$ and $\left|\pi(\bb)\right|=\left|\bb\right|-1$, which means 
    \begin{equation}\label{produc_when_fuldim}
        \left|\aaa\right|\cdot\left|\bb\right|=\left|\aaa_0\right|\cdot\left|\pi(\bb)\right|+\left|\aaa_1\right|\cdot\left|\pi(\bb)\right|+\left|\aaa\right|.
    \end{equation}
    Since every vector in $\bb'$ defines at most two facets of $P$ and also contains $\zero$, $F\le 2\left|\bb\right| -1$, and thus from \eqref{produc_when_fuldim} we conclude
    \begin{equation}\label{VF_straightforward}
        V\cdot F \leq 2 \left(\left|\aaa_0\right|\cdot\left|\pi(\bb)\right|+\left|\aaa_1\right|\cdot\left|\pi(\bb)\right|\right) \leq 4\cdot\left|\aaa_0\right|\cdot\left|\pi(\bb)\right|
    \end{equation}
    Consider three cases:
    \begin{enumerate}
        \item $\left|\aaa_0\right| > d$ and $\left|\pi(B)\right| > d$. By Theorem \ref{d2d_plus_2d}, we have
        \[\left|\aaa_0\right|\cdot\left|\pi(\bb)\right| \leq (d-1)2^{d-1}+2(d-1)\]
        and with \eqref{VF_straightforward} we are done. 
        \item $\left|\pi(B)\right| = d$. Together with $\bb_1=\{0,b_d\}$, this means that $\bb'$ is a basis of $\mathbb{R}^d$. Every vector in $\bb'$ then has to define two facets of $P$, since otherwise $P$ is unbounded. Thus $P$ is affinely isomorphic to the cube.
        \item $\left|\aaa_0\right| = d$. Note that as $|\aaa_1| \leq |\aaa_0|$ and $\operatorname{dim}(\aaa_1)=d-1$, we also have $\left|\aaa_1\right|=d$. If $|\pi(\bb)|\leq \frac{7}{8} \cdot 2^{d-1}$, then \eqref{VF_straightforward} implies $V \cdot F \leq \frac{7}{8} d \cdot 2^{d+1} < (d-1)2^{d+1} + 8(d-1)$, so we may further assume
        \begin{equation}\label{piBisLarge}
            |\pi(\bb)| > \frac{7}{8} \cdot 2^{d-1}.
        \end{equation}
        We will now make several observations about the structure of $\aaa$ and $\bb$ that will make it clear that $P$ is affinely isomorphic to the cross-polytope. Let $a_0=0, a_1, \ldots, a_{d-1}$ be the elements of $\aaa_0$ and $\{u_{1}, \ldots, u_{d-1}\}$ be the basis of $\operatorname{span}(\aaa_0)$, dual to $\{a_1, \ldots, a_{d-1}\}$. Note that for every $j\in [d-1]$ there is a facet of $P$ that contains vertices $\{a_0, \ldots, a_{d-1}\}\setminus \{a_j\}$ and differs from $\aaa_0$. The vector $b_{\{j\}}\in \bb$, orthogonal to this facet, must satisfy $\pi(b_{\{j\}})=u_j$. Given $S\subseteq [d-1]$, let us denote by $b_{S}$ an element of $\bb$ for which $\pi(b_S)=\sum_{j\in S}u_j$, if there is one, with $b_{\varnothing}=0$ to avoid ambiguity. Consider the basis of $\mathbb{R}^d$ that is dual to $\{b_{\{1\}}, b_{\{2\}}, \ldots, b_{\{d-1\}}, b_d\}$. It is $\{a_1, a_2, \ldots, a_{d-1}, v\}$ with $v$ that satisfies 
        \[
            \la v, b_d\ra = 1\text{ and }\forall j \in [d-1]:\:\la v, b_{\{j\}}\ra = 0.
        \]\
        This means that 
        \begin{equation}\label{A1-representation}
            \aaa_1 = \{v+\sum_{j \in S}a_j:\:S\in \sss\}
        \end{equation}
    
        for some family $\sss$ of subsets of $[d-1]$ with $|\sss| = d$. Our goal is to show that ${\sss= \{S\subseteq[d-1]:\:|S|\geq d-2\}}$, as then Lemma~\ref{crossPolyRepr} would imply that $P$ is affinely  isomorphic to the cross-polytope, and we would be done. For $T \subseteq [d-1]$ denote $\sigma_T=\sum_{j\in T} a_j$ and note that, given $b_S\in \bb$,
        \[
            \la \sigma_T, b_S \ra = \la \sigma_T, \pi(b_S) \ra = \Bigl< \sigma_T, \sum_{j\in S}\pi(b_{\{j\}}) \Bigr> = \Bigl< \sum_{j\in T} a_j, \sum_{j\in S}b_{\{j\}} \Bigr> = |T \cap S|.
        \]
        Arguing indirectly, assume that $\exists S_1, S_2 \in \sss:\:|S_2\setminus S_1| > 1$. Inequality \eqref{piBisLarge} and Lemma~\ref{78sufficientLemma} imply that there exists $b_S\in \bb$ such that $|S \cap S_2| - |S \cap S_1| > 1$. But \eqref{A1-representation}
        means that
        \begin{align*}
            \{-1,0,1\} & \ni \la v+\sum_{j \in S_2}  a_j\,,\,b_{S}\ra - \la v+\sum_{j \in S_1} a_j\,,\, b_{S} \ra = \la \sigma_{S_2} - \sigma_{S_1}\,,\, b_{S} \ra 
            = |S_2 \cap S| - |S_1 \cap S|,
        \end{align*}
         
        a contradiction. Therefore, $\forall S_1, S_2 \in \sss:\:|S_2\setminus S_1| \leq 1$, which by Lemma \ref{smallDifferenceLemma} implies that either $\sss = \{S\subseteq[d-1]: |S| \geq d-2\}$ or $\sss = \{S\subseteq[d-1]: |S| \leq 1\}$. In case of the former, \eqref{A1-representation} and Lemma~\ref{crossPolyRepr} imply that $P$ is affinely isomorphic to the cross-polytope, and we are done. Finally, assume, looking for a contradiction, that $\sss = \{S\subseteq[d-1]: |S| \leq 1\}$. Then \eqref{A1-representation} implies that $\aaa_1$ is simply $\aaa_0$ shifted by $v$. $P$ is therefore affinely isomorphic to the cartesian product of a segment with a \mbox{$(d-1)$-dimentional} simplex. Therefore, $P$ has $d+2$ facets and $|\pi(\bb)| = d+1 \leq \frac{7}{8} 2^{d-1}$ for $d>3$, contradicting \eqref{piBisLarge}.
    \end{enumerate}
\end{proof}

\bibliographystyle{abbrv}
\nocite{*}
\bibliography{references}

\appendix

\section{Appendix}\label{appendix}

This section is here in order to make the paper self-contained. Here, we essentially repeat the proofs from \cite{kupavskii22} of the claims that we formulated  in  Section~\ref{secstability2}. 

\claimassumptions*
\begin{proof}
    If $|\{ a \in \aaa : \la a, b_d\ra = 0\}| \le |\{ a \in \aaa : \la a, b_d\ra = 1\}|$, then we can choose any $a_* \in \aaa$ with $\la a_*, b_d\ra = 1$ (which exists since $\aaa$ spans $\R^d$) and replace $\aaa$ by $\aaa - a_*$, $\bb$ by $(\bb \setminus \{b_d\}) \cup \{-b_d\}$, and $b_d$ by $-b_d$.
    This yields~(i).

    After this replacement, for each $b \in \bb$ there is some $\eps_b \in \{\pm 1\}$ such that $\la a, b\ra \in \{0,\eps_b\}$ holds for all $a \in \aaa$.
    Each $b$ with $\{\la a,b\ra : a \in \aaa_0\} = \{0,-1\}$ is replaced by $-b$, which yields~(ii).

    Let $\aaa_1'$ be a translate of $\aaa_1$ such that $\zero \in \aaa_1'$.
    Note that, for each $b \in \bb$ we now have $\{\la a,b\ra : a \in \aaa_0\} = \{0,1\}$ or $\{\la a,b\ra : a \in \aaa_0\} = \{0\}$.
    In the second case, we replace $b$ by $-b$ if $\{\la a,b\ra : a \in \aaa_1'\} = \{0,-1\}$, otherwise we leave it as it is.

    It remains to show that $\pi(\bb)$ does not contain opposite points after this transformation.
    To this end, let $b,b' \in \bb$ such that $\pi(b) = \beta \pi(b')$ for some $\beta \ne 0$, where $\pi(b),\pi(b') \ne \zero$.
    We have to show that $\beta = 1$.
    Note that for every $a \in \aaa_0 \cup \aaa_1' \subseteq U$ we have
    \[
        \la a,b\ra = \la a, \pi(b)\ra = \beta \la a, \pi(b')\ra = \beta \la a,b'\ra.
    \]
    Suppose first that $\{\la a,b\ra : a \in \aaa_0\} \ne \{0\}$.
    By~\eqref{eqscala0} there exists some $a \in \aaa_0$ with $1 = \la a,b\ra = \beta \la a,b'\ra$.
    Thus, we have $\la a,b'\ra \ne 0$ and hence $\la a,b'\ra = 1$, again by~\eqref{eqscala0}.
    This yields $\beta = 1$.

    Suppose now that $\{\la a,b\ra : a \in \aaa_0\} = \{0\}$.
    Note that this implies $\{\la a,b'\ra : a \in \aaa_0\} = \{0\}$.
    As $\aaa_0 \cup \aaa_1'$ spans $U$, we must have $ \{\la a,b\ra : a \in \aaa_1'\} \ne \{0\}$ and hence there is some $a \in \aaa_1'$ with $\la a,b\ra = 1$.
    Moreover, we have $\beta \la a,b'\ra = 1$, and in particular $\la a,b'\ra \ne 0$.
    This implies $\la a,b'\ra = 1$ and hence $\beta = 1$.
\end{proof}

As in the previous proof, let $\aaa_1'$ be a translate of $\aaa_1$ such that $\zero \in \aaa_1'$.
Note that for each $b \in \bb$ there are $\eps_b,\gamma_b \in \{\pm 1\}$ such that
\begin{align}
    \label{eqscaleps}
    & \la a, b\ra \in \{0,\eps_b\} \text{ for each } a \in \aaa \text{ and} \\
    \label{eqscala1pgamma}
    & \la a, b\ra \in \{0,\gamma_b\} \text{ for each } a \in \aaa_1'.
\end{align}

\noindent The proofs of the subsequent claims rely on the following two lemmas.

\begin{lemma}
    \label{lemslice}
    Suppose that $X \subseteq \{0,1\}^d \cup \{0,-1\}^d$ does not contain opposite points.
    Then we have $|X| \le 2^{\dim X}$.
\end{lemma}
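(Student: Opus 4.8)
The plan is to induct on the affine dimension $k=\dim X$ rather than on the ambient dimension $d$, slicing along a single coordinate. The base case $k=0$ is a single point, where $|X|=1=2^0$. For the inductive step, since $k\ge 1$ the affine hull of $X$ is not a point, so some coordinate functional $x_i$ is non-constant on $X$; I fix such an $i$ and split $X$ according to the value of its $i$-th coordinate into $X_0,X_+,X_-$, the points with $x_i$ equal to $0,1,-1$. Because $x_i$ is non-constant on $\operatorname{aff}(X)$, each nonempty level set $\{x_i=v\}\cap\operatorname{aff}(X)$ has dimension $k-1$, so every slice has affine dimension at most $k-1$. Let $\pi_i$ denote the projection forgetting the $i$-th coordinate.

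First I would treat $X_0$: here $x_i=0$ is constant, so $\pi_i$ is injective and dimension-preserving on $X_0$, and $\pi_i(X_0)\subseteq\{0,1\}^{d-1}\cup\{0,-1\}^{d-1}$ still has no opposite points, since two points of a $0$-slice that became opposite after deleting coordinate $i$ would already be opposite in $\R^d$ (they agree, hence are "opposite", in the deleted coordinate, as $0=-0$). Thus the induction hypothesis gives $|X_0|=|\pi_i(X_0)|\le 2^{k-1}$.

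The crux is handling $X_+$ and $X_-$ together. Each point of $X_+$ is a $0/1$ vector and each point of $X_-$ is a $0/-1$ vector, so $P:=\pi_i(X_+)\subseteq\{0,1\}^{d-1}$ and $Q:=\pi_i(X_-)\subseteq\{0,-1\}^{d-1}$. The key observation, which is exactly what rescues the projection from manufacturing spurious opposite pairs, is that $P\cup Q$ has no opposite points: if $\pi_i(p)=-\pi_i(q)\neq\zero$ for $p\in X_+,\,q\in X_-$, then $p$ and $q$ are opposite in every coordinate other than $i$, and they are also opposite in coordinate $i$ (namely $1=-(-1)$), so $q=-p$ would already be a forbidden opposite pair in $X$. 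The same reasoning shows $P\cap Q=\varnothing$ (otherwise $e_i$ and $-e_i$ would both lie in $X$), whence $|X_+|+|X_-|=|P|+|Q|=|P\cup Q|$. Moreover $P\cup Q=\pi_i(X_+\cup X_-)$ has affine dimension at most $k-1$: if both slices are nonempty then $x_i$ takes the two values $\pm1$ on $X_+\cup X_-$, so deleting it drops the dimension by one; if only one is nonempty the dimension is preserved and already at most $k-1$. Hence the induction hypothesis applies and $|X_+|+|X_-|=|P\cup Q|\le 2^{k-1}$.

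Adding the two bounds yields $|X|=|X_0|+|X_+|+|X_-|\le 2^{k-1}+2^{k-1}=2^{k}$, completing the induction. The main obstacle throughout is precisely that deleting a coordinate can turn a non-opposite pair of $X$ into an opposite pair of the projection, which would break a naive induction on $d$; the entire point of slicing by coordinate \emph{value} is that the $\pm1$ slices lie in a single orthant and the only dangerous cross-slice pairs are exactly the genuine opposite pairs already excluded by hypothesis, so the no-opposite-points property is preserved at every step and no factor of $3$ (one per slice) ever appears.
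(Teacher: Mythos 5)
Your slicing into $X_0,X_+,X_-$ and the verification that no opposite points are created when projecting $X_0$ or $X_+\cup X_-$ are both correct; the gap is in the dimension count for the mixed slice. The claim that $P\cup Q=\pi_i(X_+\cup X_-)$ has affine dimension at most $k-1$ because ``$x_i$ takes the two values $\pm1$ on $X_+\cup X_-$, so deleting it drops the dimension by one'' is false. Forgetting the $i$-th coordinate lowers the affine dimension of a set $Y$ exactly when the direction $e_i$ lies in the direction space of $\operatorname{aff}(Y)$; the fact that the \emph{functional} $x_i$ is non-constant on $Y$ only says that $e_i$ is not orthogonal to that direction space, which is a different condition. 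Each individual slice does have dimension at most $k-1$, but the union $X_+\cup X_-$ can have dimension $k$, and $\pi_i$ can preserve that dimension. Concretely, take $d=3$ and $X=\{(1,1,0),(1,0,1),(-1,0,0)\}$: this lies in $\{0,1\}^3\cup\{0,-1\}^3$, has no opposite points, and $k=\dim X=2$. The coordinate $i=1$ is non-constant on $X$, and one gets $X_0=\varnothing$, $X_+=\{(1,1,0),(1,0,1)\}$, $X_-=\{(-1,0,0)\}$, so $P\cup Q=\{(1,0),(0,1),(0,0)\}$ has affine dimension $2$, not $1$, and $|X_+|+|X_-|=3>2^{k-1}=2$. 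The lemma itself of course still holds here ($|X|=3\le 4$), but your intermediate inequality fails and the induction hypothesis at level $k-1$ cannot be applied to $P\cup Q$, so the proof does not close. Restructuring the induction to be on $d$ instead does not rescue it either, since then the best one gets for the mixed slice is $2^k$, giving $|X|\le 2^{k-1}+2^k$.

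This is precisely the difficulty the paper's proof is built around. It inducts on the ambient dimension $d$, disposes of the full-dimensional case directly, and otherwise places $X$ in a hyperplane $H=\{x:\la c,x\ra=\delta\}$ with $\delta\in\{0,1\}$. It then deletes a coordinate $i^*$ with $c_{i^*}\ne 0$, so that the deletion is injective on all of $H$ and hence genuinely dimension-preserving on $X$, and it chooses $i^*$ so that no opposite points are created; when no such coordinate exists, it shows every nonzero coordinate of $c$ equals $\pm 2\delta$ and finishes with a short parity argument forcing $X=\varnothing$. To repair your argument you would need an analogous mechanism for selecting the coordinate (one for which the deletion really does interact correctly with $\operatorname{aff}(X)$), or a different way to bound the mixed slice; as written, the step is wrong.
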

\begin{proof}
    We prove the statement by induction on $d \ge 1$, and observe that it is true for $d = 1$.
    Now let $d \ge 2$.
    If $\dim X = d$, then we are also done.
    It remains to consider to case where $X$ is contained in an affine hyperplane $H \subseteq \R^d$.
    Let $c = (c_1,\ldots,c_d) \in \R^d$, $\delta \in \{0,1\}$ such that $$H = \{ x \in \R^d : \la c,x\ra = \delta \}.$$
    For each $i \in \{1,\dots,d\}$ let $\pi_i : H \to \R^{d-1}$ denote the projection that forgets the $i$-th coordinate, and let $e_i \in \R^d$ denote the $i$-th standard unit vector. Note that $\pi_{i^*}(X) \subseteq \{0,1\}^{d-1} \cup \{0,-1\}^{d-1}$.

    Suppose there is some $i^* \in \{1,\dots,d\}$ such that $\la c, e_{i^*}\ra \ne 0$ and $\pi_{i^*}(X)$ does not contain opposite points.
    By the induction hypothesis we obtain
    \[
        |X| = |\pi_{i^*}(X)| \le 2^{\dim \pi_{i^*}(X)} = 2^{\dim X},
    \]
    where the first equality and the last equality hold since $\pi_{i^*}$ is injective (due to $\la c, e_{i^*}\ra \ne 0$).

    It remains to consider the case in which there is no such $i^*$.
    Consider any $i \in \{1,\dots,d\}$.
    If $\la c, e_i \ra \ne 0$, then there exist $x=(x_1,\ldots,x_d),x'=(x_1',\ldots,x_d') \in X$, $x \ne x'$ such that $\pi_i(x) = -\pi_i(x')$.
    We may assume that $\pi_i(x) \in \{0,1\}^{d-1}$ and hence $\pi_i(x') \in \{0,-1\}^{d-1}$.
    As $X$ does not contain opposite points, we must have $x_i = 1$ and $x'_i = 0$, or $x_i = 0$ and $x'_i = -1$.
    In the first case we obtain
    \begin{align*}
        2 \delta
        = \la c,x\ra + \la c,x'\ra
        & = [\la \pi_i(c), \pi_i(x)\ra + c_ix_i] + [\la \pi_i(c), \pi_i(x')\ra + c_ix'_i] \\
        & = [\la \pi_i(c), \pi_i(x)\ra + c_i] + [\la\pi_i(c), \pi_i(x')\ra] \\
        & = c_i.
    \end{align*}
    Similarly, in the second case we obtain $2 \delta = -c_i$.

    If $\delta = 0$, this would imply that $c = \zero$, a contradiction to the fact that $H \ne \R^d$.
    Otherwise, $\delta = 1$ and hence every nonzero coordinate of $c$ is $\pm 2$.
    Thus, for every $x \in \Z^d$ we see that $\la c,x\ra$ is an even number, in particular $\la c,x\ra \ne \delta$.
    This means that $X \subseteq \Z^d \cap H = \emptyset$, and we are done.
\end{proof}

\noindent A direct consequence of Lemma \ref{lemslice} that we will employ is

\begin{lemma}
    \label{lemsliceb}
    Let $\aaa,\bb \subseteq \R^d$ such that $\aaa$ spans $\R^d$, $\bb$ does not contain opposite points, and for every $b \in \bb$ there is some $\eps_b \in \{ \pm 1\}$ such that $\{\la a,b\ra : a \in \aaa\} \subseteq \{0,\eps_b\}$.
    Then we have $|\bb| \le 2^{\dim \bb}$.
\end{lemma}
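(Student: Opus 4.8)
The plan is to reduce Lemma~\ref{lemsliceb} to Lemma~\ref{lemslice} by changing coordinates so that the vectors of $\bb$ take only entries in $\{0,\pm 1\}$. Since $\aaa$ spans $\R^d$, I would first pick a basis $a_1,\ldots,a_d$ of $\R^d$ with all $a_i\in\aaa$, and let $a_1^*,\ldots,a_d^*$ be the dual basis, so that $\la a_i,a_j^*\ra=\delta_{ij}$. Writing each $b\in\bb$ in the dual basis as $b=\sum_{i=1}^d c_i(b)\,a_i^*$, the coefficient $c_i(b)$ equals $\la a_i,b\ra$ — this is exactly the ``$\bb$ lies in a cube'' observation from the preliminaries. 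Let $\Phi:\R^d\to\R^d$ be the coordinate isomorphism $\Phi(b)=(\la a_1,b\ra,\ldots,\la a_d,b\ra)$; being the coordinate map of a basis, it is linear and invertible.

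Next I would verify that $\Phi(\bb)\subseteq\{0,1\}^d\cup\{0,-1\}^d$. For a fixed $b\in\bb$, the hypothesis gives $\eps_b\in\{\pm 1\}$ with $\la a,b\ra\in\{0,\eps_b\}$ for all $a\in\aaa$; in particular $\la a_i,b\ra\in\{0,\eps_b\}$ for each $i$. Hence $\Phi(b)\in\{0,\eps_b\}^d$, which lies in $\{0,1\}^d$ if $\eps_b=1$ and in $\{0,-1\}^d$ if $\eps_b=-1$. I would then check that $\Phi(\bb)$ inherits the ``no opposite points'' property: since $\Phi$ is linear we have $\Phi(-b)=-\Phi(b)$, and since $\Phi$ is injective, $\{y,-y\}\subseteq\Phi(\bb)$ with $y\ne\zero$ would force $\{\Phi^{-1}(y),-\Phi^{-1}(y)\}\subseteq\bb$ with $\Phi^{-1}(y)\ne\zero$, contradicting the assumption on $\bb$. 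Thus Lemma~\ref{lemslice} applies to $X=\Phi(\bb)$ and gives $|\Phi(\bb)|\le 2^{\dim\Phi(\bb)}$.

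Finally, because $\Phi$ is an invertible linear (hence affine) map, it preserves cardinality and dimension: $|\Phi(\bb)|=|\bb|$ and $\dim\Phi(\bb)=\dim\bb$. Combining these equalities with the bound from Lemma~\ref{lemslice} yields $|\bb|\le 2^{\dim\bb}$, as desired. I do not expect a genuine obstacle — the lemma really is a direct consequence — but the one point requiring care is the uniform use of $\Phi$: the sign $\eps_b$ is allowed to depend on $b$, yet every image still lands in the single fixed set $\{0,1\}^d\cup\{0,-1\}^d$, which is precisely the set to which Lemma~\ref{lemslice} applies. One should also read the dimension in Lemma~\ref{lemslice} as the affine dimension that the linear isomorphism $\Phi$ preserves.
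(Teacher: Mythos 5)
Your proposal is correct and is essentially the paper's own proof: the paper likewise picks a basis of $\R^d$ from $\aaa$, expresses $\bb$ in the dual basis so that it lands in $\{0,1\}^d \cup \{0,-1\}^d$ with no opposite points, and invokes Lemma~\ref{lemslice}. You have merely spelled out the routine verifications (the coordinates are the scalar products $\la a_i,b\ra$, the map preserves opposite points, cardinality, and affine dimension) that the paper leaves implicit.
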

\begin{proof}
    Let $a_1,\dots,a_d \in \aaa$ be a basis of $\R^d$ and express elements of $\bb$ in the dual basis, it then becomes a subset of $\{0,1\}^d \cup \{0,-1\}^d$ with no opposite points. By Lemma~\ref{lemslice}, $|\bb| \le 2^{\dim \bb}$.
\end{proof}

We are ready to continue with the proofs of the remaining claims.

\claimpreimagespi*
\begin{proof}
    Let $y := \pi(b)$ for some $b \in \bb$ and observe that $\pi^{-1}(y) = \{x \in \R^d : \pi(x) = y\}$ is a one-dimensional affine subspace.
    By~\eqref{eqscaleps} and Lemma~\ref{lemsliceb} we obtain $|\bb \cap \pi^{-1}(y)| \le 2$.
\end{proof}

\ineqbasic*
\begin{proof}
    Claim \ref{cl2} implies $|\bb| = 2|\pi(\bb)| - |\bb_*|$ or $2(|\pi(\bb)| - |\bb_*|) = |\bb \setminus \bb_*|$. With $|\aaa_0|\geq|\aaa_1|$ this gives
    \begin{align*}
        |\aaa||\bb| = (|\aaa_0| + |\aaa_1|)(2|\pi(\bb_*)| - |\bb_*|) &\leq 2|\aaa_0||\pi(\bb_*)| + 2|\aaa_1||\pi(\bb)|-2|\aaa_1||\pi(\bb)| \\
        &= 2|\aaa_0||\pi(\bb_*)| + |\aaa_1||\bb \setminus \bb_*|
    \end{align*}   
\end{proof}

\claimpreimagestau*
\begin{proof}
    Fix any $b \in \bb$ and let $v := \pi(b)$.
    Consider the orthogonal complement $W \subseteq U$ of $U_0$ in $U$.
    As $\tau^{-1}(\tau(v)) 
    = v + W 
    $, it suffices to show that
    \[
        |(v + W) \cap \pi(\bb)| \le 2^{d - 1 - \dim U_0}
    \]
    holds.
    To this end, consider the linear subspace $\Pi \subseteq U$ spanned by $v$ and $W$ and let $\sigma : U \to \Pi$ denote the orthogonal projection on $\Pi$.
    
    First, suppose that $\sigma(\aaa_1')$ spans $\Pi$.
    For every $a \in \aaa_1' \subseteq U$ and every $b \in \bb$ with $\pi(b) \in v + W \subseteq \Pi $ we have
    \[
        \la \sigma(a), \pi(b)\ra = \la a,\pi(b)\ra = \la a,b\ra \in \{0,\gamma_b\}
    \]
    by~\eqref{eqscala1pgamma}.
    Moreover, recall that $\pi(\bb)$ does not contain opposite points by Claim~\ref{cl1}~(iii).
    Thus, the pair $\sigma(\aaa_1')$ and $(v + W) \cap \pi(\bb)$ satisfies the requirements of Lemma~\ref{lemsliceb} (in $\Pi$), and hence we obtain
    \[
        |(v + W) \cap \pi(\bb)| \le 2^{\dim(v + W)} = 2^{\dim W} = 2^{\dim U - \dim U_0} = 2^{d - 1 - \dim U_0}.
    \]
    It remains to consider the case in which $\sigma(\aaa_1')$ does not span $\Pi$. Recall that we chose $b_d$ as the nonzero vector in $\bb$ with the maximal $\varphi(b_d):=\max\bigl(\operatorname{dim}(\aaa_0), \operatorname{dim}(\aaa_1)\bigr)$ for the corresponding $\aaa_0$ and $\aaa_1$. 
    Unless $|(v + W) \cap \pi(\bb)| = 1$, we will identify points $b_1,b_2 \in \bb$ with $\max \{ \varphi(b_1),\varphi(b_2) \} > \varphi(b_d)$, a contradiction to the choice of $b_d$.

    As $\aaa_0 \cup \aaa_1'$ spans $U$, we know that $\sigma(\aaa_0 \cup \aaa_1')$ spans $\Pi$.
    Since $\aaa_0$ is orthogonal to $W$, this means that $\sigma(\aaa_0)$ spans a line, and $\sigma(\aaa_1')$ spans a hyperplane $H$ in $\Pi$.
    Note that we have $v \notin W$ (otherwise $W = \Pi$ and so $\sigma(\aaa_1')$ spans $\Pi$).
    Thus, every nonzero point in $\sigma(\aaa_0)$ has nonzero scalar product with $v$.
    Moreover, for every $a \in \aaa_0$ with $\sigma(a) \ne \zero$ we have $\la \sigma(a), v\ra = \la a, v\ra = \la a,b\ra \in \{0,1\}$ by~\eqref{eqscala0}.
    Thus, since the nonzero vectors in $\sigma(\aaa_0)$ are collinear, we obtain
    \[
        \sigma(\aaa_0) \subseteq \{\zero, \sigma(a_0)\}
    \]
    for some $a_0 \in \aaa_0$. Since $\mathbf 0\in H,$ we have $\sigma(\aaa_0)\setminus H\subseteq \{\sigma(a_0)\}$ and further, since $\sigma(\aaa_0\cup \aaa_1')$ spans $\Pi$, we have $\sigma(\aaa_0)\setminus H= \{\sigma(a_0)\}$. 
    Let $c \in \Pi$ be a normal vector of $H$.  
    As $\sigma(a_0) \notin H$, we may scale $c$ so that $\la \sigma(a_0), c\ra = 1$.
    Let $a_* \in \aaa_1$ such that $\aaa_1' = \aaa_1 - a_*$.
    We define
    \[
        b_1 := c - \delta_1 b_d \ne \zero,
    \]
    where $\delta_1 := \la a_*, c\ra$.
    For every $a \in \aaa_0$ we have
    \[
        \la a,b_1\ra = \la a, c\ra = \la \sigma(a), c\ra \in \{\la \zero,c\ra, \la \sigma(a_0), c\ra\} = \{0,1\},
    \]
    and for every $a \in \aaa_1$ we have
    \begin{align*}
        \la a,b_1\ra
        = \la \underbrace{a - a_*}_{\in \aaa_1'}, b_1\ra + \la a_*, b_1\ra
        &= \la a - a_*, c\ra + \la a_*, b_1\ra
         = \la {\underbrace{\sigma(a - a_*)}_{\in H}}, c\ra + \la a_*, b_1\ra \\
        & = \la a_*, b_1\ra
        = \la a_*, c\ra - \delta_1 \la a_*, b_d\ra
        = \la a_*, c\ra - \delta_1
        = 0.
    \end{align*}
    Thus, by the maximality of $\bb$, (a scaling of) the vector $b_1$ is contained in $\bb$.
    Since we assumed $\zero \in \aaa_0$, we have $\varphi(b_1) \ge \dim(\aaa_1) + 1$.

    In order to construct $b_2$, let us suppose that there is another point $b' \in \bb$ with $v' := \pi(b') \ne v$ and $v' \in (v + W)$.
    If there is no such point, then the statement of the claim is true.
    Recall that $\sigma(a_0)$ is orthogonal to $W$, and let
    \[
        \xi := \la \sigma(a_0), v\ra = \la \sigma(a_0), \underbrace{v - v'}_{\in W}\ra + \la \sigma(a_0), v'\ra = \la \sigma(a_0), v'\ra.
    \]
    Choose $v'' \in \{v,v'\}$ such that $\xi c \ne v''$, and let $b'' \in \{b,b'\}$ such that $\pi(b'') = v''$.
    Define $\delta_2 := \la a_*,v'' - \xi c\ra$ and note that
    \[
        b_2 := v'' - \xi c - \delta_2 b_d
    \]
    is nonzero since $v'' - \xi c \in U \setminus \{\zero\}$.
    For every $a \in \aaa_0$ we have
    \[
        \la a,b_2\ra = \la a, \underbrace{v'' - \xi c}_{\in \Pi}\ra = \la \sigma(a), v'' - \xi c\ra,
    \]
    which is zero if $\sigma(a) = \zero$.
    Otherwise, $\sigma(a) = \sigma(a_0)$ and we obtain
    \[
        \la a,b_2\ra = \la \sigma(a_0), v''\ra - \xi \la \sigma(a_0), c\ra = \la \sigma(a_0), v''\ra - \xi = 0.
    \]
    Thus, $b_2$ is orthogonal to $\aaa_0$.
    Moreover, note that
    \[
        \la a_*, b_2\ra = \la a_*, v'' - \xi c\ra - \delta_2 \underbrace{\la a_*, b_d\ra}_{= 1} = 0.
    \]
    Thus, for every $a \in \aaa_1$ we have
    \begin{align*}
        \la a,b_2\ra
        = \la a - a_*, b_2\ra + \la a_*, b_2\ra
        = \la a - a_*, b_2\ra
        & = \la a - a_*, v''\ra - \xi \underbrace{\la a - a_*, c\ra}_{= 0} - \delta_2 \underbrace{\la a - a_*, b_d\ra}_{= 0} \\
        & = \la a - a_*, v''\ra = \la a - a_*, b''\ra \in \{0,\gamma_{b''}\}
    \end{align*}
    by~\eqref{eqscala1pgamma}.
    Thus, again by the maximality of $\bb$, (a scaling of) the vector $b_2$ is contained in $\bb$, and since $b_2$ is orthogonal to $\aaa_0$ and $a_* \in \aaa_1$, we have $\varphi(b_2) \ge \dim(\aaa_0) + 1$.
    However, by the choice of $b_d$ we must have
    \[
         \max \{ \dim(\aaa_0), \dim(\aaa_1) \} + 1 \le \max \{\varphi(b_1), \varphi(b_2)\} \le \varphi(b_d) = \max\{\dim(\aaa_0), \dim(\aaa_1)\},
    \]
    a contradiction.
\end{proof}

\claimrestbbconstant*
\begin{proof}
    Let $b \in \bb \setminus \bb_*$ and, for the sake of contradiction, suppose that $|\{ \la a,b\ra : a \in \aaa_0 \}| = |\{ \la a,b\ra : a \in \aaa_1 \}| = 2$.
    Let $b' \in \bb \setminus \{b\}$ such that $\pi(b) = \pi(b')$.
    In other words, we have $b' = b + \gamma b_d$ for some $\gamma \ne 0$.
    Then, by~\eqref{eqscala0} we have
    \[
        \{ \la a,b'\ra : a \in \aaa_0 \} = \{ \la a,b\ra : a \in \aaa_0 \} = \{0,1\}
    \]
    and hence we obtain $\eps_b = \eps_{b'} = 1$ by~\eqref{eqscaleps}.
    Again by~\eqref{eqscaleps} we see
    \[
        \{0,1\} \supseteq \{ \la a,b'\ra : a \in \aaa_1 \} = \{ \la a,b\ra : a \in \aaa_1 \} + \gamma = \{0,1\} + \gamma = \{\gamma, 1+\gamma\},
    \]
    which implies $\gamma = 0$, a contradiction.
\end{proof}

\ineqkey*
\begin{proof}
    $\tau(\pi(\bb))$ and $\aaa_0$ are both spanning $U_0$ and have binary scalar products, so by Theorem~\ref{d_plus_one_two_d} (or by the induction hypothesis, in the context of the proof of Theorem~\ref{d_plus_one_two_d} in \cite{kupavskii22})
    \[
        |\tau(\pi(\bb))||\aaa_0| \leq (\operatorname{dim}U_0+1)2^{\operatorname{dim}U_0}
    \]
    Combining this with Claim~\ref{cl3} and Inequality~\ref{in0} we get
    \[
        |\aaa||\bb| \leq 2 \cdot (\operatorname{dim}(U_0)+1)2^{d - 1} + |\aaa_1|(|\bb_0| + |\bb_1|) \leq \left(\operatorname{dim}U_0+1\right)2^d + |\aaa_0||\bb_0| + |\aaa_1||\bb_1|,
    \]
    where the second inequality is due to $|\aaa_0| \geq |\aaa_1|$.
\end{proof}

\ineqforclfive*
\begin{proof}
    The first (and second) inequality is a direct consequence of Lemma~\ref{lemsliceb} after writing $\aaa$ (or $\bb$) in the basis, dual to a basis found in $\bb$ (or $\aaa$). The last inequality follows from the definition of $\bb_i$: for each $b \in \bb_i$ there is $\xi_b$ such that 
    \[
        \aaa_i \subset W_i\text{, where }W_i = \{ x \in \R^d : \la x,b\ra = \xi_b \text{ for all } b \in \bb_i \},
    \] 
    and clearly $\operatorname{dim}(W_i) \leq d - \operatorname{dim}\bigl(\operatorname{span}(\bb_i)\bigr)$.
\end{proof}

\claimaaaibbi*
\begin{proof}
    By Inequality~\ref{ineqForCl5}, $|\aaa_i||\bb_i| \leq 2^{\operatorname{dim}(\aaa_i)} \cdot 2^{\operatorname{dim}(\operatorname{span}(\bb_i))} \leq 2^d$.
\end{proof}

\end{document}